\newtheorem{coro}{Corollary}[section]
\newtheorem{prop}{Proposition}[section]
\newtheorem{remark}{Remark}[section]
\begin{document}

\title{A Model Predictive Approach to Preventing Cascading Failures of Power Systems}

\author{Chao~Zhai, Hehong~Zhang, Gaoxi Xiao, and~Tso-Chien~Pan
\thanks{Chao Zhai, Hehong Zhang, Gaoxi Xiao and Tso-Chien Pan are with Institute of Catastrophe Risk Management, Nanyang Technological University, 50 Nanyang Avenue, Singapore 639798 and with Future Resilient Systems, Singapore-ETH Centre, 1 Create Way, CREATE Tower, Singapore 138602. Chao Zhai, Hehong Zhang and Gaoxi Xiao are also with School of Electrical and Electronic Engineering, Nanyang Technological University. Corresponding author: Gaoxi Xiao. Email: EGXXiao@ntu.edu.sg}
}

\maketitle

\begin{abstract}
Power system blackouts are usually triggered by the initial contingency and then deteriorate as the branch outage spreads quickly. Thus, it is crucial to eliminate the propagation of cascading outages in its infancy. In this paper, a model predictive approach is proposed to protect power grids against cascading blackout by timely shedding load on buses. The cascading dynamics of power grids is described by a cascading outage model of transmission lines coupled with the DC power flow equation. In addition, a nonlinear convex optimization formulation is established to characterize the optimal load shedding for the mitigation of cascading failures. As a result, two protection schemes are designed on the basis of the optimization formulation. One scheme carries out the remedial action once for all, while the other focuses on the consecutive protection measures. Saddle point dynamics is employed to provide a numerical solution to the proposed optimization problem, and its global convergence is guaranteed in theory. Finally, numerical simulations on IEEE 57 Bus Systems have been implemented to validate the proposed approach in terms of preventing the degradation of power grids.
\end{abstract}

Keywords: Power system protection, cascading blackout, load shedding, convex optimization, saddle point dynamics

\section{Introduction}\label{sec:int}
The real-time protection of power systems has been a great challenge to researchers for decades since power grids are inevitably exposed to a variety of disruptive disturbances from extreme weather, equipment aging, human errors and even malicious attacks. In fact, it is impossible to achieve the completely reliable and secure operation of power grids due to unexpected contingencies and evolving nature of power grids \cite{beg05}. On the other hand, phasor measurement technology and communication technologies have made great progress in the past decades, which provides powerful tools to detect the real-time state of power systems for emergency control.

The development of power system protection can be divided into three stages in history \cite{bo16}. Specifically, the conventional protection of power systems mainly resorts to electro-mechanical protective relay for severing overloading branches \cite{hew04}.  In spite of high reliability and simplicity in construction, these relays need to be calibrated periodically in the absence of directional feature. The introduction of computers features the second stage of power system protection as advanced intelligent control algorithms can be applied to protect power grids \cite{xyn14}. Furthermore, the utilization of global positioning system (GPS) represents a remarkable milestone, which enables engineers to synchronize time precisely and obtain the global phase information for the wide-area protection \cite{heng14}. The availability of global information on power systems allows to establish a systematic approach to cope with catastrophic scenarios in wide-area power networks. Thus, the special protection scheme (SPS) is proposed to mitigate global stresses by separating power systems into several islands and isolating the faulted areas according to predetermined actions \cite{and96,mad04}. It is demonstrated that the installation of SPSs are economically profitable even with the risk of malfunction \cite{zim02}. Nevertheless, the SPSs are designed for particular power systems suffering from certain abnormal stresses, which inevitably limits their compatibility and the universality for contingencies.

The coordination and control of multi-agent systems has attracted great interests of researchers in various fields including fault identification \cite{cz17,hhz17}, coverage control \cite{cz12} and missile interception \cite{zhai16}. Multi-agent system approaches are also applied to power system control and protection since each bus in power grids can be regarded as an intelligent agent able to exchange electric power and information with its neighboring buses \cite{mca07,tol01,nag02,bab16}. For instance, \cite{mca07} describes fundamental concepts and approaches of multi-agent systems and defines the technical issues in power and energy sector. \cite{tol01} presents a scalable multi-agent paradigm for the control of distributed energy resources in order to achieve higher reliability and more efficient power generation and consumption. \cite{nag02} introduces a multi-agent approach to power system restoration, where two types of agents are defined to achieve the target configuration through mutual interactions. \cite{bab16} proposes an adaptive multi-agent system algorithm to prevent cascading failures without incurring loss. Nevertheless, it has yet to be investigated for the predictive protection of power systems with the approach of multi-agent system.

In this paper, we propose a protection architecture to hinder cascading blackouts by predicting the cascading outages and timely shedding loads on buses. The proposed protection architecture is disturbance-dependent and can be obtained by solving the optimization problem instead of using predetermined preventive actions. Since it has been suggested that the DC power flow is a desirable substitute for the AC power flow in high-voltage transmission networks \cite{yan15,mou09}, for simplicity, the DC power flow equation is employed in this work to compute the power flow and update the connectivity of power networks. The main contributions of our work are listed as follows.
\begin{enumerate}
  \item Propose a disturbance-related real-time protection architecture of power systems by predicting the cascading evolution of transmission lines.
  \item Develop two protection schemes to curb the propagation of cascading outages and succeed in implementing the optimal load shedding with saddle point dynamics.
  \item Compare and verify two protection schemes for more efficient protection and control of power systems during emergency.
\end{enumerate}

The outline of this paper is organized as follows. Section \ref{sec:pre} presents
the protection architecture of power systems against cascading blackouts. Section \ref{sec:ofp} provides the optimization formulation of nonrecurring protection scheme and theoretical analysis, followed by the scheme of recurring protection in Section \ref{sec:cps}. Simulations and validation on IEEE $57$ Bus System are given in Section \ref{sec:sim}.
Finally, we conclude the paper and discuss future work in Section \ref{sec:con}.

\section{Protection Architecture}\label{sec:pre}

\begin{figure}
\scalebox{0.06}[0.06]{\includegraphics{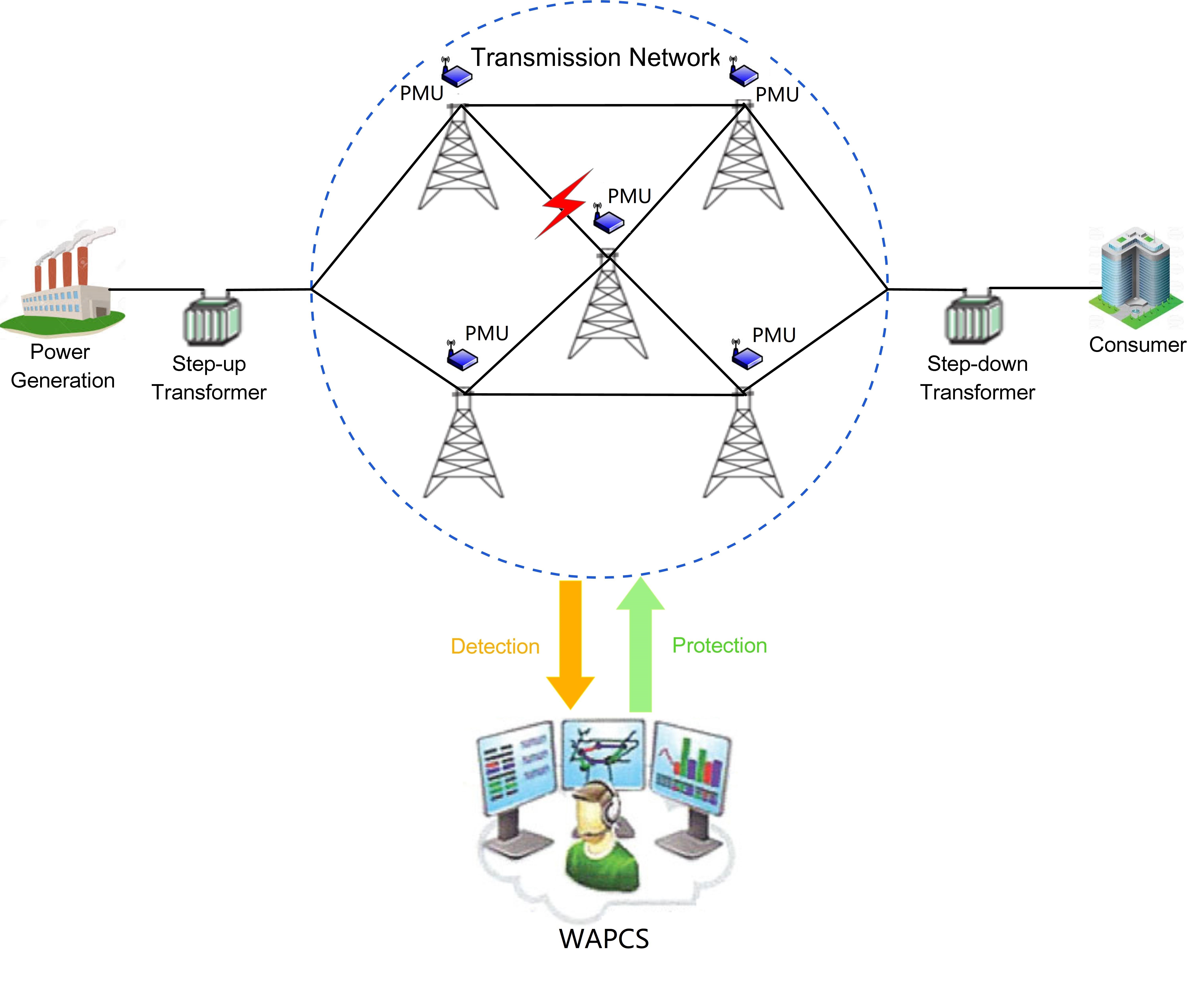}}\centering
\caption{\label{prot} Schematic diagram on monitoring, protection and control of power systems.}
\end{figure}
The operating state of modern power grids is monitored in real time with the aid of phasor measurement units (PMUs) \cite{pha06}, which provide the voltage and current phasor and frequency for wide-area monitoring system (WAMS) \cite{ter11}. This enables the wide-area protection and control system (WAPCS) to identify the disturbances or faults as soon as possible and then take remedial measurements to prevent cascading blackouts. Specifically, the disturbances are detected by PMUs and then transmitted to the phasor data concentrator (PDC) \cite{arm10}. Through the internet, PDCs send the disturbance-related data to data server and wide-area protection and control system, where the cascading process of transmission lines is predicted via the cascading model. Finally, the protection architecture preplans the remedial measures and produces corrective control for the termination of cascading outages (see Fig.~\ref{prot}).

In this work, our goal is to develop a protection architecture that integrates wide-area monitoring, prediction and control of power networks so that WAPCS is able to make disturbance-related remedial actions in order to achieve least power loss during emergency.

First of all, we clarify the concept of cascading step in order to describe the state evolution of power systems during cascading blackout. A cascading step of power systems is defined as one topological change of power networks due to contingencies, human errors or the overloading of transmission lines during cascading process.

To predict the cascading evolution of power systems, it is necessary to obtain the cascading dynamics of power networks, which includes the DC power flow equation and the cascading model of transmission lines. Consider a power network with $n$ buses and $n_b$ branches, and the initial disturbances $\delta$ ($e.g.$, lightening, storm, poor contactor and collapsed vegetation, etc) affect the branch admittance as follows
$$
Y_p^1=Y_p^0+\delta
$$
where $Y_p^0\in R^n$ refers to the $n$-dimensional vector of the {\sl original} branch admittance and $Y_p^1$ denotes the branch admittance at the first cascading step. For simplicity,
the DC power flow equation is employed to compute the power flow on each transmission line.
\begin{equation}\label{dc_power}
P=A^Tdiag(Y_p^k)A\theta^k, \quad k\in\mathbb{N}
\end{equation}
where $P$ denotes the $n_b$-dimensional vector of injected power on each bus and $A\in R^{n\times n_b}$ refers to the branch-bus incidence matrix \cite{stag68}. $\theta^k$ represents the $n_b$-dimensional vector of voltage angle on each bus at the $k$-th cascading step. In addition, the operation $diag(x)$ obtains a square diagonal matrix with the elements of vector $x$ on the main diagonal. The solution to equation (\ref{dc_power}) is expressed as
$$
\theta^k=(A^Tdiag(Y_p^k)A)^{-1^*}P
$$
where the operator $-1^*$ is used to compute the inverse of a square matrix, as defined in \cite{cz17}. According to Lemma 3.2 in \cite{cz17}, the transmission power from Bus $i$ to Bus $j$ at the $k$-th cascading step is given by
\begin{equation}\label{pijk}
P^k_{ij}=e_i^TA^T diag(Y^k_p)Ae_j(e_i-e_j)^T(A^T diag(Y^k_p)A)^{-1^*}P, \quad i,j\in I_{n_b}=\{1,2,...,n_b\}
\end{equation}
where $e_i$ represents the $n_b$ dimensional unit vector with the $i$-th element being equal to $1$ and other elements being equal to $0$. Thus the cascading model of transmission lines is presented as follows
\begin{equation}\label{cas_model}
Y_p^{k+1}=\mathcal{G}(P_{ij}^k)\circ Y_p^k
\end{equation}
where $\circ$ represents the Hadamard product and $\mathcal{G}(P_{ij}^k)$ is given by
$$
\mathcal{G}(P_{ij}^k)=(g(P_{i_1j_1}^k,c_{i_1j_1}),g(P_{i_2j_2}^k,c_{i_2j_2}),...,g(P_{i_nj_n}^k,c_{i_nj_n}))^T \in R^n
$$
with the following approximation function \cite{cz17}
$$
g(P^k_{ij},c_{ij})=\left\{
                \begin{array}{ll}
                  0, & \hbox{$|P^k_{ij}|\geq \sqrt{c_{ij}^2+\frac{\pi}{2\sigma}}$;} \\
                  1, & \hbox{$|P^k_{ij}|\leq\sqrt{c_{ij}^2-\frac{\pi}{2\sigma}}$;} \\
                  \frac{1-\sin\sigma\left[(P^k_{ij})^2-c_{ij}^2\right]}{2}, & \hbox{otherwise.}
                \end{array}
              \right.
$$
where $c_{ij}$ denotes the power threshold of transmission line connecting Bus $i$ to Bus $j$. It is worth pointing out that the approximation function $g(P^k_{ij},c_{ij})$ approaches step function as the tunable parameter $\sigma$ gets close to positive infinity.

\begin{figure}
\scalebox{0.06}[0.06]{\includegraphics{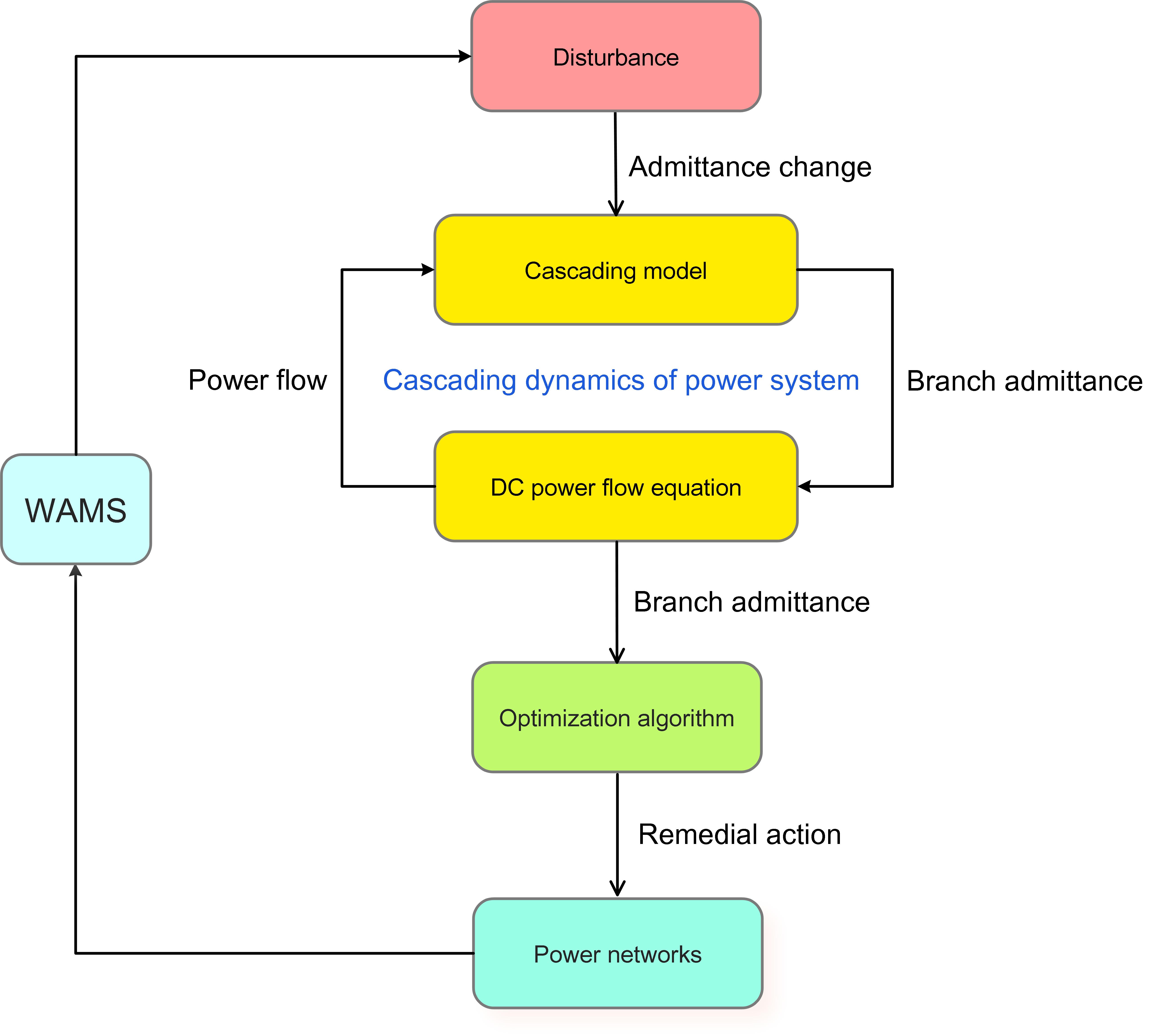}}\centering
\caption{\label{flow} Protection architecture of power systems against cascading failures.}
\end{figure}

Figure \ref{flow} presents the protection architecture of power systems against cascading outages.
On the whole, the protection architecture is composed of three building blocks including disturbance identification, cascading prediction and system protection. The malicious disturbance is detected by the WAMS, then it triggers the state evolution of cascading dynamics, which is a coupling system between the DC power flow equation (\ref{dc_power}) and the cascading model of transmission lines (\ref{cas_model}). Significantly, the cascading dynamics allows us to predict the state evolution of power systems and preplan remedial actions by solving the relevant optimization problem. After power networks are stabilized by the control command from optimization algorithm, the protection architecture starts the detection and identification of disturbances once again. The above process proceeds iteratively to achieve the real-time wide-area protection and control of power grids. Thus, the key task is to design the effective protection scheme and optimization algorithm ($i.e.$, green box in Fig.~\ref{flow}) to terminate the propagation of cascading outages.

In the subsequent two sections, we propose two different protection schemes to prevent the propagation of cascading outages.

\section{Nonrecurring Protection Scheme}\label{sec:ofp}
This section presents the first scheme, $i.e.$ Nonrecurring Protection Scheme (NPS), with which WAPCS implements remedial measures at a given cascading step. Normally, it takes some time for WAMS and WAPCS to identify the disturbance and compute the desired amount of load shedding on each bus when the cascading process occurs. As a result, we need to consider the optimal load shedding of power systems against the propagation of cascading failures at the $m$-th cascading step, and the step number $m$ depends on the propagation speed of cascading outages.  Thus, the optimization problem can be formulated as
\begin{equation}\label{convex}
\begin{split}
    &~~~~~~\min_{P} J(P,W) \\
    &s.t.~~Y^m_p=\mathcal{G}(P_{ij}^{m-1})\circ \mathcal{G}(P_{ij}^{m-2})\circ\cdot\cdot\cdot\circ \mathcal{G}(P_{ij}^1)\circ Y_p^1 \\
    &~~~~~~P^m_{ij}=e_i^TA^T diag(Y^m_p)Ae_j(e_i-e_j)^T(A^T diag(Y^m_p)A)^{-1^*}P \\
    &~~~~~~(P^m_{ij})^2\leq \sigma_{ij}^2, \quad (i,j)\in \mathcal{E} \\
    &~~~~~~\underline{P}_i\leq P_i \leq \bar{P}_i, \quad i\in I_{n_b} \\
\end{split}
\end{equation}
where the objective function $J(P,W)$ is given by
\begin{equation}\label{cost}
J(P,W)=\|W\circ(P-P^0)\|^2
\end{equation}
$P^0$ represents the original power vector injected on each bus, and $P=(P_1,P_2,...P_{n_b})^T$ refers to the injected power vector after load shedding. In addition, the weight vector $W=(W_1,W_2,...,W_{n_b})^T$ characterizes the bus significance in power systems, and $P^m_{ij}$ denotes the transmission power on the branch connecting Bus $i$ and Bus $j$ at the $m$-th cascading step. $\sigma_{ij}$ represents the corresponding power threshold of transmission line.

The cost function of Problem (\ref{convex}) characterizes the power mismatch on each bus between the original load distribution and the computed load distribution from the optimization algorithm. The first term in the constraint conditions describes the cascading process of power grids before taking remedial measures. The second term calculates the transmission power on each branch at the $m$-th cascading step, and the third one imposes the restriction on the upper bound of transmission power  on each branch. The final term stipulates the range of load shedding on each bus. Moreover, it is demonstrated that Optimization Problem (\ref{convex}) is convex.

\begin{prop}
Optimization Problem (\ref{convex}) is convex.
\end{prop}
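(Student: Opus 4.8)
The plan is to show that Problem~(\ref{convex}) has a convex objective and a convex feasible set, so that it qualifies as a convex program. First I would observe that the cost $J(P,W)=\|W\circ(P-P^0)\|^2$ is a composition of the affine map $P\mapsto W\circ(P-P^0)$ with the squared Euclidean norm; since the squared norm is convex and nondecreasing in the appropriate sense on the image and affine precomposition preserves convexity, $J(\cdot,W)$ is convex in $P$ (indeed a positive semidefinite quadratic form $\sum_i W_i^2 (P_i-P_i^0)^2$). The box constraints $\underline{P}_i\le P_i\le\bar P_i$ are obviously convex (an intersection of half-spaces). So the crux is the constraint $(P^m_{ij})^2\le\sigma_{ij}^2$ for $(i,j)\in\mathcal E$, together with the two equality constraints that \emph{define} $Y^m_p$ and $P^m_{ij}$ in terms of $P$.

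The key realization is that the two equality constraints are not genuine optimization constraints in $P$ at all: the first one, $Y^m_p=\mathcal G(P_{ij}^{m-1})\circ\cdots\circ\mathcal G(P_{ij}^1)\circ Y_p^1$, together with the DC power flow recursion~(\ref{dc_power}), determines $Y^m_p$ \emph{solely} from the disturbed admittance $Y_p^1$ and the network data — it does not involve the decision variable $P$ (the injected power after load shedding enters the physics only at and after the step where remedial action is taken, i.e.\ at step $m$). Hence $Y^m_p$ is a fixed, known vector once the prediction up to step $m$ has been run, and with it $M:=A^T\mathrm{diag}(Y^m_p)A$ is a fixed matrix. Then the second equality is simply the \emph{definition} $P^m_{ij}=\left(e_i^TMe_j\right)(e_i-e_j)^T M^{-1^*}P$, which is a \emph{linear} function of $P$; substituting it back, the constraint $(P^m_{ij})^2\le\sigma_{ij}^2$ becomes $\bigl(\ell_{ij}^T P\bigr)^2\le\sigma_{ij}^2$ for a fixed vector $\ell_{ij}$, equivalently the pair of linear inequalities $-\sigma_{ij}\le \ell_{ij}^T P\le\sigma_{ij}$. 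A pair of half-spaces is convex, so the feasible set is an intersection of finitely many half-spaces, hence a polyhedron, hence convex. Minimizing a convex quadratic over a polyhedron is a convex program, which proves the claim.

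The step I expect to be the main obstacle — or at least the one requiring the most care in the writeup — is justifying rigorously that $Y^m_p$, and therefore $M=A^T\mathrm{diag}(Y_p^m)A$ and each $\ell_{ij}$, is independent of the decision variable $P$: one must be precise that $P$ in Problem~(\ref{convex}) is the post-shedding injection applied \emph{at} step $m$, whereas the cascading recursion producing $Y_p^1,\dots,Y_p^m$ is driven by the pre-shedding injection $P^0$ (or the nominal operating point), so the $\mathcal G(P_{ij}^{k})$ factors for $k=1,\dots,m-1$ are constants. Once that modeling point is pinned down, everything else is the routine observation that affine images of $P$ fed through convex functions stay convex. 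A secondary, very minor point is that $M^{-1^*}$ (the generalized inverse of \cite{cz17}) is a fixed linear operator on the relevant subspace, so $P\mapsto M^{-1^*}P$ is linear; this is immediate from its definition and needs only a one-line remark.
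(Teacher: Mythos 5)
Your proposal is correct and follows essentially the same route as the paper's own (very brief) proof: the cost is a convex quadratic in $P$, $P^m_{ij}$ is affine in $P$ because $Y_p^m$ is fixed by the pre-shedding cascade, and hence the constraint $(P^m_{ij})^2\le\sigma_{ij}^2$ together with the box constraints defines a convex feasible set. Your additional care in justifying that $Y_p^m$ (and thus each $\ell_{ij}$) is independent of the decision variable $P$ only makes explicit what the paper asserts implicitly when it states that ``$P^m_{ij}$ is affine with respect to $P$.''
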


\begin{proof}
The cost function $J(P,W)$ and the constraint function $(P^m_{ij})^2-\sigma^2$ are convex, and $P^m_{ij}$ is affine with respect to $P$. Thus, (\ref{convex}) is a convex optimization problem.
\end{proof}

Then we present the necessary and sufficient condition for the optimal solution to Optimization Problem (\ref{convex}).
\begin{prop}\label{kkt}
Suppose Slater's condition holds (nonempty feasible region) for Convex Optimization Problem (\ref{convex}). Then $P^*$ is the optimal solution if and only if there exist Lagrangian multipliers $\lambda^*_{ij}$, $\bar{\tau}^*_i$ and $\underline{\tau}^*_i$ satisfying the KKT conditions.
$$
\nabla J(P^*,W)+\sum_{i=1}^{n_b}\sum_{j=1}^{n_b}\lambda^*_{ij}\nabla\left[(P^{m*}_{ij})^2-\sigma_{ij}^2\right]
+\sum_{i=1}^{n_b}(\bar{\tau}^*_i-\underline{\tau}^*_i)e_i=\mathbf{0}
$$
and
\begin{equation*}
\begin{split}
    &(P^{m*}_{ij})^2\leq \sigma_{ij}^2 \\
    &P^{m*}_{ij}=e_i^TA^T diag(Y^m_p)Ae_j(e_i-e_j)^T(A^T diag(Y^m_p)A)^{-1^*}P^* \\
    &\underline{P}_i\leq P^*_i \leq \bar{P}_i \\
    & \lambda^*_{ij}[(P^{m*}_{ij})^2-\sigma_{ij}^2]=0 \\
    & \bar{\tau}^*_i(P^*_i-\bar{P}_i)=0 \\
    & \underline{\tau}^*_i(\underline{P}_i-P^*_i)=0
\end{split}
\end{equation*}
\end{prop}

\begin{proof}
The result directly follows from Theorems 3.25-3.27 in \cite{ap06}.
\end{proof}

Design the following Lagrangian function
$$
L(P,\lambda,\tau)=J(P,W)+\sum_{(i,j)\in \mathcal{E}}\lambda_{ij}\left[(P^m_{ij})^2-\sigma_{ij}^2\right]
+\sum_{i=1}^{n_b}\bar{\tau}_i(P_i-\bar{P}_i)+\sum_{i=1}^{n_b}\underline{\tau}_i(\underline{P}_i-P_i)
$$
where $(i,j)$ is an element of set $\mathcal{E}$ if Bus $i$ and Bus $j$ are connected in the network. In addition, we can prove that KKT conditions of convex optimization problem imply a saddle point of the corresponding Lagrangian function.
\begin{prop}\label{iff}
Suppose Slater's condition holds for Optimization Problem (\ref{convex}). The optimal solution $P^*$ to Optimization Problem (\ref{convex}) satisfies the KKT condition in Proposition \ref{kkt} with Lagrangian multipliers $\lambda^*$ and $\tau^*$ if and only if $(P^*,\lambda^*,\tau^*)$ is a
saddle point of the  Lagrangian function $L(P,\lambda,\tau)$.
\end{prop}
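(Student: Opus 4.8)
The plan is to derive the equivalence straight from the definition of a saddle point. Here $(P^*,\lambda^*,\tau^*)$ being a saddle point of $L$ means that $\lambda^*\geq\mathbf{0}$, $\bar\tau^*\geq\mathbf{0}$, $\underline\tau^*\geq\mathbf{0}$ and
$$
L(P^*,\lambda,\tau)\;\leq\;L(P^*,\lambda^*,\tau^*)\;\leq\;L(P,\lambda^*,\tau^*)
$$
hold for every $P\in R^{n_b}$ and all multipliers $\lambda\geq\mathbf{0}$, $\bar\tau\geq\mathbf{0}$, $\underline\tau\geq\mathbf{0}$. I would match the left inequality (maximizing $L$ over the multipliers at $P^*$) with primal feasibility of the inequality constraints together with complementary slackness, and the right inequality (minimizing $L$ over $P$ at the dual point) with the stationarity identity $\nabla_P L(P^*,\lambda^*,\tau^*)=\mathbf{0}$. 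The only structural fact I rely on is that $P\mapsto L(P,\lambda^*,\tau^*)$ is convex and differentiable; this holds because $J(\cdot,W)$ is convex, $P^m_{ij}$ is affine in $P$ so each $(P^m_{ij})^2$ is convex, $\lambda^*_{ij}\geq 0$, and the box terms are affine in $P$ — the same observation used to establish convexity of Problem (\ref{convex}). Slater's condition does not actually enter the equivalence below; it is retained only so that the hypotheses agree with those of Proposition \ref{kkt}.

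For the implication from the KKT conditions to the saddle-point property, I assume the KKT system holds at $P^*$ with multipliers $\lambda^*,\tau^*$. For the left inequality: primal feasibility makes $(P^{m*}_{ij})^2-\sigma_{ij}^2$, $P^*_i-\bar P_i$ and $\underline P_i-P^*_i$ nonpositive, so for any admissible $\lambda,\tau\geq\mathbf{0}$ every dualized term in $L(P^*,\lambda,\tau)$ is $\leq 0$, giving $L(P^*,\lambda,\tau)\leq J(P^*,W)$; complementary slackness kills every dualized term at $(\lambda^*,\tau^*)$, so $L(P^*,\lambda^*,\tau^*)=J(P^*,W)$, and the inequality follows. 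For the right inequality: $P\mapsto L(P,\lambda^*,\tau^*)$ is differentiable and convex, and stationarity says its gradient vanishes at $P^*$; a differentiable convex function with a zero gradient at a point attains its global minimum there, hence $L(P^*,\lambda^*,\tau^*)\leq L(P,\lambda^*,\tau^*)$ for all $P$.

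For the converse, I assume $(P^*,\lambda^*,\tau^*)$ is a saddle point. The multiplier sign conditions are built into the definition. If some constraint value were strictly positive at $P^*$ — say $(P^{m*}_{ij})^2-\sigma_{ij}^2>0$ — then letting the corresponding $\lambda_{ij}\to+\infty$ with the other multipliers fixed would send $L(P^*,\lambda,\tau)\to+\infty$, contradicting the left inequality; the same reasoning applies to the two box constraints, so $P^*$ is primal feasible (the equality defining $P^m_{ij}$ is automatic). Evaluating the left inequality at the zero multipliers yields $J(P^*,W)\leq L(P^*,\lambda^*,\tau^*)$, so the total of the dualized terms at $(\lambda^*,\tau^*)$ is nonnegative; since each of those terms is a nonnegative multiplier times a nonpositive constraint value, each is nonpositive, and hence each must vanish — this is complementary slackness. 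Finally, the right inequality says $P^*$ minimizes the differentiable function $P\mapsto L(P,\lambda^*,\tau^*)$ over $R^{n_b}$, so $\nabla_P L(P^*,\lambda^*,\tau^*)=\mathbf{0}$, which is exactly the stationarity condition. Together these recover the full KKT system of Proposition \ref{kkt}.

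I do not expect a genuine obstacle: this is the standard correspondence between KKT points and Lagrangian saddle points for a convex program, so the work is bookkeeping rather than ideas. The two places that need care are keeping the sign conventions straight — in Problem (\ref{convex}) every dualized constraint is written in the form $(\cdot)\leq 0$ and every multiplier is nonnegative, which is what makes the dualized terms nonpositive at a feasible primal point — and pinning down the exact domain of the saddle point ($P$ free over $R^{n_b}$, multipliers in the nonnegative orthant) before invoking the $\to+\infty$ argument and the zero-gradient-implies-global-minimum argument. As an alternative, the proposition can simply be quoted from a standard convex-optimization text such as \cite{ap06}.
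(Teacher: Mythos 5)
Your proposal is correct and follows essentially the same route as the paper's Appendix~A: the forward direction combines primal feasibility and complementary slackness with convexity of $P\mapsto L(P,\lambda^*,\tau^*)$ plus the vanishing gradient, and the converse extracts feasibility, complementary slackness and stationarity from the two saddle inequalities (your explicit multiplier-to-infinity argument just spells out a step the paper asserts). The only piece the paper includes that you defer is showing, under Slater's condition via weak duality, that a saddle point also forces $P^*$ to be optimal for Problem~(\ref{convex}); in your write-up this is implicitly delegated to Proposition~\ref{kkt}, which is an acceptable shortcut.
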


\begin{proof}
See Appendix A.
\end{proof}

Next, we present the saddle point dynamics to search for the saddle point of Lagrangian function $L(P,\lambda,\tau)$ \cite{cher17}.
\begin{equation}\label{saddle}
\begin{split}
    \dot{P}&=-\nabla_P~L(P,\lambda,\tau) \\
           &=-2 W\circ(P-P^0) -2\sum_{(i,j)\in\mathcal{E}}\lambda_{ij}P^m_{ij}R^m_{ij}-(\bar{\tau}-\underline{\tau}) \\
    \dot{\lambda}_{ij}&=[\nabla_{\lambda_{ij}}~L(P,\lambda,\tau)]^+_{\lambda_{ij}}=[(P^m_{ij})^2-\sigma_{ij}^2]^+_{\lambda_{ij}} \\
    \dot{\bar{\tau}}_i&=[\nabla_{\bar{\tau}_i}~L(P,\lambda,\tau)]^+_{\bar{\tau}_i}=[P_i-\bar{P}_i]^+_{\bar{\tau}_i}\\
    \dot{\underline{\tau}}_i&=[\nabla_{\underline{\tau}_i}~L(P,\lambda,\tau)]^+_{\underline{\tau}_i}=[\underline{P}_i-P_i]^+_{\underline{\tau}_i}
\end{split}
\end{equation}
where
$$
R^m_{ij}=e_i^TA^T diag(Y^m_p)Ae_j\cdot\left[(A^T diag(Y^m_p)A)^{-1^*}\right]^T(e_i-e_j)
$$
and the operator $[~]^+$ is defined as
\begin{equation}\label{switch}
[x]^+_y=\left\{
          \begin{array}{ll}
            x, & \hbox{$y>0$;} \\
            \max\{x,0\}, & \hbox{$y=0$.}
          \end{array}
        \right.
\end{equation}
Significantly, it is guaranteed in theory that Saddle Point Dynamics (\ref{saddle}) approaches the optimal solution of Problem (\ref{convex}) as time goes to infinity.
\begin{prop}\label{converge}
Saddle Point Dynamics (\ref{saddle}) globally asymptotically converges to the optimal solution to Optimization Problem (\ref{convex}).
\end{prop}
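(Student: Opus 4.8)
The plan is to invoke the standard convergence theory for saddle-point (primal-dual) flows applied to a convex program, using the chain of equivalences already established: by Proposition~\ref{iff}, the optimal primal solution together with its KKT multipliers is exactly a saddle point of the Lagrangian $L(P,\lambda,\tau)$, and conversely every saddle point of $L$ projects onto an optimal solution of Problem~(\ref{convex}). Hence it suffices to show that the projected gradient flow (\ref{saddle}) converges to the set of saddle points of $L$. First I would observe that $L$ is convex in $P$ (the cost is a weighted squared norm and each $(P^m_{ij})^2-\sigma_{ij}^2$ is convex in $P$ since $P^m_{ij}$ is affine in $P$) and concave—indeed affine—in $(\lambda,\bar\tau,\underline\tau)$, so $L$ is a convex-concave saddle function on $\mathbb{R}^{n_b}\times\mathbb{R}^{|\mathcal{E}|}_{\ge 0}\times\mathbb{R}^{n_b}_{\ge 0}\times\mathbb{R}^{n_b}_{\ge 0}$, and the dynamics (\ref{saddle}) is precisely the projected saddle-point flow for this function, with the $[\cdot]^+$ operator enforcing invariance of the nonnegative orthant for the dual variables.

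The core step is a Lyapunov argument. Let $z=(P,\lambda,\bar\tau,\underline\tau)$ and let $z^*=(P^*,\lambda^*,\bar\tau^*,\underline\tau^*)$ be any saddle point, whose existence is guaranteed by Slater's condition together with Proposition~\ref{kkt} and Proposition~\ref{iff}. I would take the quadratic candidate
$$
V(z)=\tfrac12\|P-P^*\|^2+\tfrac12\|\lambda-\lambda^*\|^2+\tfrac12\|\bar\tau-\bar\tau^*\|^2+\tfrac12\|\underline\tau-\underline\tau^*\|^2 .
$$
Differentiating along trajectories of (\ref{saddle}) and using that the projection terms only help (the standard inequality $(y-y^*)[x]^+_y\le (y-y^*)x$ for $y\ge0$, $y^*\ge0$), one obtains
$$
\dot V \le -(P-P^*)^{\!\top}\nabla_P L(P,\lambda,\tau) + (\lambda-\lambda^*)^{\!\top}\nabla_\lambda L + (\bar\tau-\bar\tau^*)^{\!\top}\nabla_{\bar\tau}L + (\underline\tau-\underline\tau^*)^{\!\top}\nabla_{\underline\tau}L .
$$
The right-hand side is exactly $-\big[\big(L(P,\lambda,\tau)-L(P^*,\lambda,\tau)\big)+\big(L(P,\lambda^*,\tau^*)-L(P,\lambda,\tau)\big)\big]$ up to the convexity/concavity slack; more precisely, convexity of $L$ in $P$ and concavity in the dual block give
$$
\dot V \le \big[L(P^*,\lambda,\tau)-L(P,\lambda,\tau)\big] + \big[L(P,\lambda,\tau)-L(P,\lambda^*,\tau^*)\big] = L(P^*,\lambda,\tau)-L(P,\lambda^*,\tau^*)\le 0,
$$
where the last inequality is the saddle-point property of $z^*$. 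Thus $V$ is nonincreasing, trajectories are bounded, and $V$ is a global Lyapunov function.

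To upgrade boundedness to convergence I would apply the LaSalle invariance principle (valid here since the right-hand side of (\ref{saddle}) is a piecewise-smooth, globally Lipschitz vector field, so solutions exist and are unique for all $t\ge0$): every trajectory converges to the largest invariant set contained in $\{\dot V=0\}$. On that set the convexity/concavity inequalities hold with equality, which forces $L(P^*,\lambda,\tau)=L(P,\lambda,\tau)=L(P,\lambda^*,\tau^*)$ along the limiting trajectory; a short argument then shows any point of this invariant set is itself a saddle point of $L$, hence (by Proposition~\ref{iff}) maps to an optimal solution of Problem~(\ref{convex}). Finally, picking $z^*$ to be a specific limit point and re-running the Lyapunov estimate centered at that point pins down the whole trajectory to converge to it, giving global asymptotic convergence. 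The main obstacle I anticipate is the last "invariant-set" step: making rigorous that $\{\dot V=0\}$ intersected with the invariant dynamics consists only of saddle points—this requires handling the nonsmooth projection operator $[\cdot]^+$ carefully (e.g. via the Carathéodory/Krasovskii solution framework of \cite{cher17}) and exploiting that the quadratic cost $J$ is strictly convex in $P$ to rule out nontrivial oscillations in the primal variable, which is the part that makes the limit point well defined rather than merely an $\omega$-limit cycle.
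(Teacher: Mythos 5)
Your proposal follows essentially the same route as the paper's own proof: the identical quadratic Lyapunov function centered at a saddle point, the same projection inequality for the $[\cdot]^+$ terms, and the same convexity--concavity estimate yielding $\dot V \le L(P^*,\lambda,\tau)-L(P,\lambda^*,\tau^*)\le 0$. Your concluding LaSalle/invariance step is in fact more careful than the paper, which simply asserts $\dot V<0$ (a claim that fails, e.g., when $P=P^*$ but the multipliers are not optimal) and concludes convergence directly.
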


\begin{proof}
See Appendix B.
\end{proof}

\begin{table}
 \caption{\label{MPA1} Nonrecurring Protection Scheme.}
 \begin{center}
 \begin{tabular}{lcl} \hline
  1: Set the running time $T$ and the cascading step $m$ \\
  2: \textbf{while} ($t<=T$) \\
  3: ~~~~~~~Detect network topology at time $t$ \\
  4: ~~~~~~~Update the branch-bus incidence matrix $A(t)$  \\
  5: ~~~~~~~\textbf{if} ($A(t)\neq A$)  \\
  6: ~~~~~~~~~~~Identify the disturbance $\delta=Y_p^1-Y_p^0$ \\
  7: ~~~~~~~~~~~Initiate cascading dynamics with (\ref{pijk}) and (\ref{cas_model})  \\
  8: ~~~~~~~~~~~Save system state at the $m$-th cascading step \\
  9: ~~~~~~~~~~~Solve Saddle Point Dynamics (\ref{saddle}) \\
 10: ~~~~~~~~ Implement load shedding based on the solution to (\ref{saddle}) \\
 11: ~~~~~~~\textbf{end if}  \\
 12: ~~~~~~~Compute the power flow on each branch \\
 13: \textbf{end while} \\ \hline
 \end{tabular}
 \end{center}
\end{table}

Finally, we present the Nonrecurring Protection Scheme in Table \ref{MPA1}. First of all, we set the running time of NPS and the number of cascading steps $m$. In reality, the running time $T$ is sufficient large since the protection scheme is recurrent. Then NPS detects the network topology in real time with the aid of WAMS and locates the targeted branches by comparing the current branch-bus incidence matrix $A(t)$ with the original one $A$. After that, NPS starts the computation of admittance changes on the targeted branches to identify the disturbance. The above disturbance initiates the cascading process to predict the state of power grids at the $m$-th cascading step, which enables us to solve Saddle Point Dynamics (\ref{saddle}) and produce the remedial actions of optimal load shedding to prevent cascading outages at the $m$-th cascading step. Finally, NPS computes the power flow on each branch to validate the remedial measurements.

\section{Recurring Protection Scheme}\label{sec:cps}
In this section, we propose the second scheme, $i.e.$ Recurring Protection Scheme (RPS), with which WAPCS takes remedial actions and implements corrective control by shedding load at the consecutive cascading steps. Compared with NPS, more control variables are available to optimize the objective function in the current formulation. Actually, NPS can be regarded as a special case of RPS by stipulating the load shedding at a given cascading step. Essentially, RPS increases the flexibility with respect to terminating cascading outages. Thus, RPS appears to be superior to the first one in terms of mitigating the power loss. Theoretically, the optimization problem can be formulated as
\begin{equation}\label{cascon}
\begin{split}
    &~~~~~~\min_{P^k}~\mathcal{C}(P^1,P^2,...,P^m,W) \\
    &s.t.~~P^k_{ij}=e_i^TA^T diag(Y^k_p)Ae_j(e_i-e_j)^T(A^T diag(Y^k_p)A)^{-1^*}P^k \\
    &~~~~~~Y^{k+1}_p=\mathcal{G}(P_{ij}^{k})\circ Y_p^{k} \\
    &~~~~~~(P^m_{ij})^2\leq \sigma_{ij}^2, \quad (i,j)\in \mathcal{E} \\
    &~~~~~~\underline{P}_i\leq P^k_i \leq \bar{P}_i, \quad i\in I_{n_b}, \quad k\in I_m \\
\end{split}
\end{equation}
where the objective function is given by
\begin{equation}\label{cost_m}
    \mathcal{C}(P^1,P^2,...,P^m,W)=\sum_{k=1}^{m}\|W\circ(P^k-P^0)\|^2
\end{equation}
and $W$ represents the weight vector.
\begin{prop}\label{outper}
Solutions to Optimization Problem (\ref{cascon}) outperform those to Convex Optimization Problem (\ref{convex}) in terms of minimizing objective function.
\end{prop}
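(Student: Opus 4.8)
The plan is to exhibit Optimization Problem (\ref{convex}) as a restriction of Optimization Problem (\ref{cascon}) obtained by forbidding load shedding at every cascading step except the $m$-th one, and then invoke the elementary fact that minimizing over a subset of a feasible region cannot yield a smaller optimal value.

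Concretely, I would first augment (\ref{cascon}) with the extra equality constraints $P^k = P^0$ for $k = 1,2,\ldots,m-1$, i.e.\ no remedial action before the $m$-th step. Then I would check that, under these constraints, the feasible region of (\ref{cascon}) coincides with that of (\ref{convex}): the pre-$m$ transmission powers $P^k_{ij}$ computed from $(Y^k_p, P^0)$ are exactly the quantities describing the uncontrolled cascade ``before taking remedial measures''; the recursion $Y^{k+1}_p=\mathcal{G}(P_{ij}^{k})\circ Y^k_p$ telescopes into the Hadamard product $Y^m_p=\mathcal{G}(P_{ij}^{m-1})\circ\cdots\circ\mathcal{G}(P_{ij}^1)\circ Y_p^1$ appearing in (\ref{convex}); and the remaining step-$m$ power-flow identity, the overload bounds $(P^m_{ij})^2\leq\sigma_{ij}^2$, and the box constraints $\underline{P}_i\leq P^m_i\leq\bar{P}_i$ are literally the constraints of (\ref{convex}) with $P:=P^m$. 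Next I would observe that on this restricted set the RPS cost collapses to the NPS cost, since $\mathcal{C}(P^0,\ldots,P^0,P^m,W)=\sum_{k=1}^{m-1}\|W\circ(P^0-P^0)\|^2+\|W\circ(P^m-P^0)\|^2=J(P^m,W)$.

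Putting these together: if $P^*$ is an optimal solution of (\ref{convex}) with optimal value $J(P^*,W)$, then $(P^0,\ldots,P^0,P^*)$ is feasible for (\ref{cascon}) and attains the value $J(P^*,W)$ there, so the optimal value of (\ref{cascon}) is at most the optimal value of (\ref{convex}); this is the desired claim. Since RPS genuinely exposes more control variables, in generic configurations the inequality is strict, but I would assert only the $\leq$ that the statement requires.

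The main obstacle is not analytical but definitional. One must pin down that both formulations use the same convention for the injected power during the uncontrolled phase (namely $P^0$ at steps $1,\ldots,m-1$ in NPS), so that the embedding $P^k\equiv P^0$ really lands inside the RPS feasible region and reproduces the NPS cascade trajectory; and one must state precisely in what sense ``outperform in terms of minimizing objective function'' is meant, given that (\ref{convex}) and (\ref{cascon}) carry syntactically different objectives — the intended reading being the comparison of optimal values through the above embedding. Once this bookkeeping is fixed, the argument is simply that a minimum over a subset is no smaller than the minimum over the whole set.
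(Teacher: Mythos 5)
Your proposal is correct and follows essentially the same route as the paper: both embed $(P^0,\ldots,P^0,P^*)$ into the feasible set of (\ref{cascon}), observe that the objective there collapses to $\|W\circ(P^*-P^0)\|^2$, and conclude that the RPS optimum cannot exceed the NPS optimum. The only cosmetic difference is that the paper additionally drops the nonnegative terms $\|W\circ(P^{k*}-P^0)\|^2$ for $k<m$ to read off the slightly sharper conclusion $\|W\circ(P^{m*}-P^0)\|^2\leq\|W\circ(P^*-P^0)\|^2$, i.e.\ a bound on the $m$-th-step mismatch alone.
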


\begin{proof}
Let $P^{k*},k\in I_m$ denote the solution to Optimization Problem (\ref{cascon}), and $P^*$ represents the solution to Convex Optimization Problem (\ref{convex}). Then it follows from
$$
\|W\circ(P^{m*}-P^0)\|^2\leq \mathcal{C}(P^{1*},P^{2*},...,P^{m*},W)\leq \mathcal{C}(P^0,P^0,...,P^0,P^*,W)=\|W\circ(P^*-P^0)\|^2
$$
that
$$
\|W\circ(P^{m*}-P^0)\|^2\leq\|W\circ(P^*-P^0)\|^2,
$$
which completes the proof.
\end{proof}

In practice, it is impractical to implement the load shedding at each cascading step since WAPCS has to take some time in order to work out the optimization algorithm. Thus, we consider the remedial actions on two consecutive cascading steps ($i.e.$, $m-1$ and $m$) and reformulate Optimization Problem (\ref{cascon}) as
\begin{equation}\label{cascon1}
\begin{split}
    &~~~~~~\min_{P^k}~\mathcal{C}(P^{m-1},P^m,W) \\
    &s.t.~~P^k_{ij}=e_i^TA^T diag(Y^k_p)Ae_j(e_i-e_j)^T(A^T diag(Y^k_p)A)^{-1^*}\tilde{P}^k \\
    &~~~~~~Y^{k+1}_p=\mathcal{G}(P_{ij}^{k})\circ Y_p^{k} \\
    &~~~~~~(P^m_{ij})^2\leq \sigma_{ij}^2, \quad (i,j)\in \mathcal{E} \\
    &~~~~~~\underline{P}_i\leq \tilde{P}^k_i \leq \bar{P}_i, \quad i\in I_{n_b}, \quad k\in I_m \\
\end{split}
\end{equation}
where
$$
\mathcal{C}(P^{m-1},P^m,W)=\|W\circ(P^{m-1}-P^0)\|^2+\|W\circ(P^m-P^0)\|^2
$$
and
$$
\tilde{P}^k=\left\{
              \begin{array}{ll}
                P^0, & \hbox{$k\leq m-2$;} \\
                P^k, & \hbox{otherwise.}
              \end{array}
            \right.
$$

\begin{coro}
Solutions to Optimization Problem (\ref{cascon1}) outperform those to Convex Optimization Problem (\ref{convex}) in terms of minimizing objective function.
\end{coro}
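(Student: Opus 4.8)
The plan is to follow the template of the proof of Proposition~\ref{outper}: exhibit a point that is feasible for Problem~(\ref{cascon1}) and whose objective value equals the optimal value of Problem~(\ref{convex}), then chain two inequalities. First I would let $P^*$ be the optimal solution of (\ref{convex}) and propose the candidate $(P^{m-1},P^m)=(P^0,P^*)$ for (\ref{cascon1}). Because $P^{m-1}=P^0$, the definition of $\tilde P^k$ gives $\tilde P^k=P^0$ for every $k\le m-1$, so the line flows $P^k_{ij}$ and the recursion $Y^{k+1}_p=\mathcal{G}(P^k_{ij})\circ Y^k_p$ reproduce, step by step, exactly the cascading trajectory encoded in the first constraint of (\ref{convex}); in particular the admittance $Y^m_p$ and the step-$m$ flows computed at $\tilde P^m=P^m=P^*$ coincide with those of (\ref{convex}) evaluated at $P=P^*$. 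Consequently the overload constraint $(P^m_{ij})^2\le\sigma_{ij}^2$ is inherited from the feasibility of $P^*$, and the box constraints $\underline P_i\le\tilde P^k_i\le\bar P_i$ hold because $P^*$ satisfies them and $P^0$ is, as the undisturbed operating point, an admissible injection. Hence $(P^0,P^*)$ is feasible for (\ref{cascon1}).

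With feasibility in hand I would evaluate the cost at the candidate, $\mathcal{C}(P^0,P^*,W)=\|W\circ(P^0-P^0)\|^2+\|W\circ(P^*-P^0)\|^2=\|W\circ(P^*-P^0)\|^2$, and then, writing $(\hat P^{m-1},\hat P^m)$ for an optimal solution of (\ref{cascon1}), close the argument with
$$
\|W\circ(\hat P^m-P^0)\|^2\le\mathcal{C}(\hat P^{m-1},\hat P^m,W)\le\mathcal{C}(P^0,P^*,W)=\|W\circ(P^*-P^0)\|^2,
$$
where the first inequality is trivial, the second is optimality together with the feasibility just established, and the equality is the computation above. This is precisely the asserted outperformance.

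I expect the only genuinely delicate step to be the feasibility check in the first paragraph — verifying that clamping the pre-$m$ injections to $P^0$ through the choice $P^{m-1}=P^0$ really does make $Y^1_p,\dots,Y^m_p$ and all intermediate line flows identical to the constraint block of (\ref{convex}), and that $P^0$ itself respects $\underline P_i\le P^0_i\le\bar P_i$. Everything after that is the same two-line comparison as in Proposition~\ref{outper}. A slicker way to package the whole argument, which avoids repeating the computation, is to observe that (\ref{cascon1}) is simply (\ref{cascon}) restricted by $P^k\equiv P^0$ for $k\le m-2$; the point $(P^0,\dots,P^0,P^*)$ used in the proof of Proposition~\ref{outper} obeys this restriction, the two objectives agree on it, and therefore the identical chain of inequalities yields the corollary.
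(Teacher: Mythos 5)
Your proposal is correct and follows essentially the paper's intended route: the paper omits the proof precisely because it is the same argument as Proposition~\ref{outper}, namely exhibiting the feasible candidate with the pre-$m$ injections clamped to $P^0$ and the last one set to the optimizer $P^*$ of (\ref{convex}), then chaining $\|W\circ(P^{m}-P^0)\|^2\leq\mathcal{C}\leq\|W\circ(P^*-P^0)\|^2$. Your explicit feasibility check (and the remark that $P^0$ must satisfy the box constraints) only makes the omitted argument more careful, so nothing further is needed.
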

\begin{proof}
The proof follows by the same argument as that in Proposition \ref{outper}, and is thus omitted.
\end{proof}

Actually, it is difficult to obtain the optimal solution to Optimization Problem (\ref{cascon1}) due to its non-convexity. The linearized method can be applied to approximate the non-convex constraint for achieving the global optima. To simplify mathematical expressions, we define
$$
\mathcal{F}(Y_p^k)=e_i^TA^T diag(Y^k_p)Ae_j(e_i-e_j)^T(A^T diag(Y^k_p)A)^{-1^*}
$$
Then we obtain $P^k_{ij}=\mathcal{F}(Y_p^k)\tilde{P}^k$ and
\begin{equation*}
\begin{split}
P^m_{ij}&=\mathcal{F}(Y_p^m)P^m\\
        &=\mathcal{F}\left[\mathcal{G}(P_{ij}^{m-1})\circ Y_p^{m-1}\right]P^m \\
        &=\mathcal{F}\left[\mathcal{G}(\mathcal{F}(Y_p^{m-1})P^{m-1})\circ Y_p^{m-1}\right]P^m
\end{split}
\end{equation*}
The gradient of $P^m_{ij}$ with respect to $[P^{m-1},P^{m}]$ is given by
\begin{equation*}
\begin{split}
\nabla P^m_{ij}&=\left(
                   \begin{array}{c}
                     \frac{\partial}{\partial P^{m-1}}\mathcal{F}\left[\mathcal{G}(\mathcal{F}(Y_p^{m-1})P^{m-1})\circ Y_p^{m-1}\right]P^m\\
                     \frac{\partial}{\partial P^m}\mathcal{F}\left[\mathcal{G}(\mathcal{F}(Y_p^{m-1})P^{m-1})\circ Y_p^{m-1}\right]P^m \\
                   \end{array}
                 \right)\\
               &=\left(
                   \begin{array}{c}
                     \nabla_{Y_p^m}\left[\mathcal{F}(Y_p^m)P^m\right]^T\left[\mathcal{G}'(\mathcal{F}(Y_p^{m-1})P^{m-1})\circ Y_p^{m-1}\right]\cdot\mathcal{F}(Y_p^{m-1})^T\\
                     \mathcal{F}\left[\mathcal{G}(\mathcal{F}(Y_p^{m-1})P^{m-1})\circ Y_p^{m-1}\right]^T \\
                   \end{array}
                 \right)
\end{split}
\end{equation*}
Therefore, $P^m_{ij}=\mathcal{F}(Y_p^m)P^m$ can be approximated by the following linear equation in the neighborhood of variables $[P^0,P^0]$.
\begin{equation}\label{pijm}
\begin{split}
\hat{P}^m_{ij}|_{[P^0,P^0]}&=\mathcal{F}\left[\mathcal{G}(\mathcal{F}(Y_p^{m-1})P^{0})\circ Y_p^{m-1}\right]P^0+\left(\nabla P^m_{ij}|_{[P^0,P^0]}\right)^T\left(
                                       \begin{array}{c}
                                         P^{m-1}-P^0 \\
                                         P^m-P^0\\
                                       \end{array}
                                     \right)\\
        &=\mathcal{F}\left[\mathcal{G}(\mathcal{F}(Y_p^{m-1})P^{0})\circ Y_p^{m-1}\right]P^0\\
        &+\nabla_{Y_p^m}\left[\mathcal{F}(Y_p^m|_{P^0})P^0\right]^T\left[\mathcal{G}'(\mathcal{F}(Y_p^{m-1})P^{0})\circ Y_p^{m-1}\right]\cdot\mathcal{F}(Y_p^{m-1})(P^{m-1}-P^0)\\
        &+\mathcal{F}\left[\mathcal{G}(\mathcal{F}(Y_p^{m-1})P^{0})\circ Y_p^{m-1}\right](P^m-P^0)\\
        &=\nabla_{Y_p^m}\left[\mathcal{F}(Y_p^m|_{P^0})P^0\right]^T\left[\mathcal{G}'(\mathcal{F}(Y_p^{m-1})P^{0})\circ Y_p^{m-1}\right]\cdot\mathcal{F}(Y_p^{m-1})(P^{m-1}-P^0)\\
        &+\mathcal{F}\left[\mathcal{G}(\mathcal{F}(Y_p^{m-1})P^{0})\circ Y_p^{m-1}\right]P^m\\
\end{split}
\end{equation}
where $Y_p^m|_{P^0}=\mathcal{G}(\mathcal{F}(Y_p^{m-1})P^0)\circ Y_p^{m-1}$.
In this way, Optimization Problem (\ref{cascon1}) can be approximated by the following problem.
\begin{equation}\label{casconv}
\begin{split}
    &~~~~~~\min_{P^k}~\mathcal{C}(P^{m-1},P^m,W) \\
    &s.t.~~\hat{P}^m_{ij}=\nabla_{Y_p^m}\left[\mathcal{F}(Y_p^m|_{P^0})P^0\right]^T\left[\mathcal{G}'(\mathcal{F}(Y_p^{m-1})P^{0})\circ Y_p^{m-1}\right]\cdot\mathcal{F}(Y_p^{m-1})(P^{m-1}-P^0)\\
    &~~~~~~~~~~~+\mathcal{F}\left[\mathcal{G}(\mathcal{F}(Y_p^{m-1})P^{0})\circ Y_p^{m-1}\right]P^m\\
    &~~~~~~(\hat{P}^m_{ij})^2\leq \sigma_{ij}^2, \quad (i,j)\in \mathcal{E} \\
    &~~~~~~\underline{P}_i\leq P^k_i \leq \bar{P}_i, \quad i\in I_{n_b}, \quad k\in \{m-1,m\} \\
\end{split}
\end{equation}

\begin{prop}
Optimization Problem (\ref{casconv}) is convex.
\end{prop}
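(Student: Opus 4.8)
The plan is to follow the same template used for Optimization Problem (\ref{convex}): show that the objective is a convex function and that the feasible set is an intersection of convex sets, the only genuinely new ingredient being the verification that the linearized branch-power expression $\hat{P}^m_{ij}$ is affine in the decision variables.

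First I would handle the objective. Writing $\|W\circ(P^k-P^0)\|^2=(P^k-P^0)^T diag(W)^2(P^k-P^0)$ exhibits each summand as a quadratic form with positive semidefinite Hessian $2\,diag(W)^2$, hence convex in $(P^{m-1},P^m)$; therefore $\mathcal{C}(P^{m-1},P^m,W)$, being the sum of two such terms, is convex.

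Next I would argue that the equality constraint in (\ref{casconv}) is affine. By construction, the right-hand side in (\ref{pijm}) has the form $M_1(P^{m-1}-P^0)+M_2P^m$, where $M_1=\nabla_{Y_p^m}\left[\mathcal{F}(Y_p^m|_{P^0})P^0\right]^T\left[\mathcal{G}'(\mathcal{F}(Y_p^{m-1})P^{0})\circ Y_p^{m-1}\right]\cdot\mathcal{F}(Y_p^{m-1})$ and $M_2=\mathcal{F}\left[\mathcal{G}(\mathcal{F}(Y_p^{m-1})P^{0})\circ Y_p^{m-1}\right]$. The key observation is that $Y_p^{m-1}$ is fully determined by the initial disturbance together with the pre-remedial injection $P^0$ (no load is shed before step $m-1$), so $Y_p^{m-1}$, the frozen quantity $Y_p^m|_{P^0}=\mathcal{G}(\mathcal{F}(Y_p^{m-1})P^0)\circ Y_p^{m-1}$, and consequently the blocks $M_1$ and $M_2$ are all constants, independent of the optimization variables $(P^{m-1},P^m)$. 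Hence each scalar $\hat{P}^m_{ij}$ is an affine function of $(P^{m-1},P^m)$.

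Finally I would assemble the pieces. Since $\hat{P}^m_{ij}$ is affine, $(\hat{P}^m_{ij})^2$ is the square of a scalar affine function and therefore convex, so each constraint $(\hat{P}^m_{ij})^2\leq\sigma_{ij}^2$ defines a convex sublevel set; the box constraints $\underline{P}_i\leq P^k_i\leq\bar{P}_i$ are linear; and the equality $\hat{P}^m_{ij}=M_1(P^{m-1}-P^0)+M_2P^m$ defines an affine subspace. The feasible region is the intersection of these convex sets, hence convex, and minimizing the convex objective $\mathcal{C}$ over it makes (\ref{casconv}) a convex program. The only step demanding real care is the affinity claim for $\hat{P}^m_{ij}$ — one must check that none of the coefficient blocks in (\ref{pijm}) secretly depends on $P^{m-1}$ or $P^m$; once all evaluation points are pinned to $P^0$ this is immediate, but it is the crux of the argument and the reason the linearization was introduced in the first place.
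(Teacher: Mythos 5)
Your proof is correct and follows essentially the same route as the paper's: convex quadratic objective, affinity of $\hat{P}^m_{ij}$ in $(P^{m-1},P^m)$, and convexity of the squared-affine and box constraints. The only difference is that you spell out the key point the paper leaves implicit — that $Y_p^{m-1}$ and all coefficient blocks in (\ref{pijm}) are evaluated at $P^0$ and hence independent of the decision variables — which is a worthwhile clarification but not a different argument.
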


\begin{proof}
The objective function $\mathcal{C}(P^{m-1},P^m,W)$ is convex, and $P^m_{ij}$ is an affine function of variables $P^{m-1}$ and $P^m$. Moreover, $(\hat{P}^m_{ij})^2-\sigma^2$ is convex as well. This indicates that Problem (\ref{casconv}) is convex.
\end{proof}

Next, we discuss the numerical solution to Optimization Problem (\ref{casconv}). Design the Lagrangian function for Problem (\ref{casconv}) as follows
\begin{equation*}
\begin{split}
L(P^{m},P^{m-1},\lambda,\tau)&=\mathcal{C}(P^{m-1},P^m,W)\\
&+\sum_{i=1}^{n_b}\left[\bar{\tau}^m_i(P^m_i-\bar{P}_i)+\bar{\tau}^{m-1}_i(P^{m-1}_i-\bar{P}_i)\right]\\
&+\sum_{i=1}^{n_b}\left[\underline{\tau}^m_i(\underline{P}_i-P^m_i)+\underline{\tau}^{m-1}_i(\underline{P}_i-P^{m-1}_i)\right]\\
&+\sum_{(i,j)\in \mathcal{E}}\lambda_{ij}\left[(\hat{P}^m_{ij})^2-\sigma_{ij}^2\right]
\end{split}
\end{equation*}

Saddle point dynamics to search for the saddle point of Lagrangian function $L(P^m,P^{m-1},\lambda,\tau)$ is given by
\begin{equation}\label{saddlecas}
\begin{split}
    \dot{P^k}&=-\nabla_{P^k}~L(P^m,P^{m-1}\lambda,\tau) \\
             &=-2W\circ (P^k-P^0)-2\sum_{(i,j)\in\mathcal{E}}\lambda_{ij}\hat{P}^m_{ij}R^k_{ij}-(\bar{\tau}^k-\underline{\tau}^k) \\
    \dot{\lambda}_{ij}&=[\nabla_{\lambda_{ij}}~L(P,\lambda,\tau)]^+_{\lambda_{ij}}=[(\hat{P}^m_{ij})^2-\sigma_{ij}^2]^+_{\lambda_{ij}} \\
    \dot{\bar{\tau}}^k_i&=[\nabla_{\bar{\tau}^k_i}~L(P,\lambda,\tau)]^+_{\bar{\tau}^k_i}=[P^k_i-\bar{P}_i]^+_{\bar{\tau}^k_i}\\
    \dot{\underline{\tau}}^k_i&=[\nabla_{\underline{\tau}^k_i}~L(P,\lambda,\tau)]^+_{\underline{\tau}^k_i}=[\underline{P}_i-P^k_i]^+_{\underline{\tau}^k_i}
\end{split}
\end{equation}
where $k\in\{m-1,m\}$ and
$$
R^k_{ij}=\left\{
           \begin{array}{ll}
             \nabla_{Y_p^m}\left[\mathcal{F}(Y_p^m|_{P^0})P^0\right]^T\left[\mathcal{G}'(\mathcal{F}(Y_p^{m-1})P^{0})\circ Y_p^{m-1}\right]\cdot\mathcal{F}(Y_p^{m-1})^T, & \hbox{$k=m-1$;} \\
             \mathcal{F}\left[\mathcal{G}(\mathcal{F}(Y_p^{m-1})P^{0})\circ Y_p^{m-1}\right]^T, & \hbox{$k=m$.}
           \end{array}
         \right.
$$
and the operator $[~]^+$ is defined in equation (\ref{switch}).

\begin{remark}
The solution to optimization problem (\ref{casconv}) merely guarantees $\hat{P}^m_{ij}\leq \sigma_{ij}$ instead of $P^m_{ij}\leq \sigma_{ij}, \forall (i,j)\in\mathcal{E}$. Therefore, it is necessary to check whether the constraints $P^m_{ij}\leq \sigma_{ij}, \forall (i,j)\in\mathcal{E}$ hold after shedding load at the $(m-1)$-th step and the $m$-th step according to the solution to (\ref{casconv}). The solution to (\ref{saddle}) can be adopted as a remedy if there exists $(i,j)\in\mathcal{E}$ such that $P^m_{ij}>\sigma_{ij}$.
\end{remark}

In theory, we can guarantee that Saddle Point Dynamics (\ref{saddlecas}) achieves the asymptotic convergence of global optimal solution to Optimization Problem (\ref{casconv}).
\begin{prop}\label{cp_con}
Saddle Point Dynamics (\ref{saddlecas}) globally asymptotically converges to the optimal solution to Convex Optimization Problem (\ref{casconv}).
\end{prop}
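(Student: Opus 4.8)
The plan is to mirror the proof of Proposition~\ref{converge} in Appendix~B, with the Lagrangian $L(P^m,P^{m-1},\lambda,\tau)$ of (\ref{casconv}) replacing $L(P,\lambda,\tau)$. First I would record the analogues of Propositions~\ref{kkt} and~\ref{iff} for (\ref{casconv}): the problem is convex with nonempty feasible set, so Slater's condition holds, a feasible pair $(P^{m-1*},P^{m*})$ is optimal if and only if it admits multipliers $\lambda^*,\bar\tau^*,\underline\tau^*\ge 0$ satisfying the KKT system, and this is in turn equivalent to $z^\star:=(P^{m-1*},P^{m*},\lambda^*,\bar\tau^*,\underline\tau^*)$ being a saddle point of $L$. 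The key structural fact, used throughout, is that $L$ is a convex--concave saddle function: convex in the primal pair $(P^{m-1},P^m)$ because $\mathcal C$ is quadratic, the box-constraint terms are linear, and $\lambda_{ij}\ge 0$ multiplies the convex map $(\hat P^m_{ij})^2$ (recall $\hat P^m_{ij}$ is affine in $(P^{m-1},P^m)$, with gradient encoded by $R^{m-1}_{ij},R^m_{ij}$); and affine, hence concave, in the multipliers. Dynamics (\ref{saddlecas}) is exactly the projected primal--dual gradient flow of $L$, whose equilibria coincide with its saddle points.

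For the convergence itself, I would fix a saddle point $z^\star$ and use the quadratic Lyapunov function
\[
V=\tfrac12\|P^{m-1}-P^{m-1*}\|^2+\tfrac12\|P^{m}-P^{m*}\|^2+\tfrac12\|\lambda-\lambda^*\|^2+\tfrac12\|\bar\tau-\bar\tau^*\|^2+\tfrac12\|\underline\tau-\underline\tau^*\|^2 .
\]
Along (\ref{saddlecas}) the primal blocks contribute $-(P^{k}-P^{k*})^{\!\top}\nabla_{P^k}L$ and each dual block $\mu\in\{\lambda,\bar\tau,\underline\tau\}$ contributes $(\mu-\mu^*)^{\!\top}[\nabla_\mu L]^+_\mu$; the truncation estimate $(\mu-\mu^*)^{\!\top}\big([\nabla_\mu L]^+_\mu-\nabla_\mu L\big)\le 0$, valid because $\mu^*\ge 0$, allows one to drop the projection at the cost of an inequality. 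Invoking the gradient inequality for convexity in the primal variables and for concavity in the multipliers then yields
\[
\dot V\ \le\ L(P^{m*},P^{m-1*},\lambda,\tau)-L(P^{m},P^{m-1},\lambda^*,\tau^*)\ \le\ 0 ,
\]
the last inequality being the saddle-point property of $z^\star$. This computation parallels Appendix~B line by line, with $R^{m-1}_{ij}$ and $R^m_{ij}$ playing the role of the constant gradient directions.

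Since $V$ is nonincreasing, every trajectory is bounded, so I would apply the LaSalle invariance principle for Carath\'eodory (discontinuous) systems --- the same tool used for (\ref{saddle}), cf.~\cite{cher17} --- to conclude that the trajectory approaches the largest invariant set $M\subseteq\{\dot V=0\}$. On $M$ the convexity inequality holds with equality, which by strict convexity of $\mathcal C$ in $(P^{m-1},P^m)$ (the weights $W_i$ being nonzero) pins $(P^{m-1},P^m)$ to the unique optimizer $(P^{m-1*},P^{m*})$; invariance then forces the multipliers to be stationary, so $\nabla_{P^k}L=0$ and the complementarity conditions hold, i.e.\ $M$ is contained in the set of saddle points of $L$. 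The primal part of $z(t)$ therefore converges to the optimal solution of (\ref{casconv}), and a standard re-centering of $V$ around an accumulation point of the dual trajectory --- legitimate since that point lies in $M$ and is itself a saddle point --- upgrades set-convergence to convergence of the whole state $z(t)$; the KKT$\Leftrightarrow$saddle-point equivalence then identifies the limit as the optimizer. I expect the main obstacle to be the non-smoothness: because of the operator $[\,\cdot\,]^+$, $V$ is only locally Lipschitz, so one must work with its set-valued Lie derivative and the generalized LaSalle principle, and one must verify carefully that the invariant set is exactly the saddle-point set --- both handled by the machinery already invoked in Appendix~B.
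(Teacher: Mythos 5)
Your proposal follows essentially the same route as the paper's Appendix~C: the identical quadratic Lyapunov function centered at a saddle point, the same estimate discarding the projections $[\,\cdot\,]^+$ on the dual blocks, and the same convex--concave gradient inequalities yielding $\dot V \le L(P^{m*},P^{m-1*},\lambda,\tau)-L(P^{m},P^{m-1},\lambda^*,\tau^*)\le 0$. The only divergence is the finish: the paper asserts strict decrease and concludes directly that $V\to 0$, whereas you close via LaSalle invariance, strict convexity of $\mathcal{C}$, and a re-centering of $V$, which is a more careful way to reach the same conclusion (the paper's strict inequality cannot hold at the equilibrium itself, so your added step is a genuine, if minor, improvement rather than a different method).
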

\begin{proof}
See Appendix C.
\end{proof}

\begin{table}
 \caption{\label{MPA2} Recurring Protection Scheme.}
 \begin{center}
 \begin{tabular}{lcl} \hline
  1: Set the running time $T$ and the cascading step $m$ \\
  2: \textbf{while} ($t<=T$) \\
  3: ~~~~~~~Detect network topology at time $t$ \\
  4: ~~~~~~~Update the branch-bus incidence matrix $A(t)$  \\
  5: ~~~~~~~\textbf{if} ($A(t)\neq A$)  \\
  6: ~~~~~~~~~~~Identify the disturbance $\delta=Y_p^1-Y_p^0$ \\
  7: ~~~~~~~~~~~Initiate cascading dynamics with (\ref{pijk}) and (\ref{cas_model})  \\
  8: ~~~~~~~~~~~Get system state at the $\left(m-1\right)$-th cascading step \\
  9: ~~~~~~~~~~~Solve Saddle Point Dynamics (\ref{saddlecas}) \\
  10: ~~~~~~~~~~Calculate $P^m_{ij}, \forall(i,j)\in \mathcal{E}$ with the solution to (\ref{saddlecas}) \\
  11: ~~~~~~~~~~\textbf{if} ($|P^m_{ij}|\leq\sigma_{ij},\forall(i,j)\in \mathcal{E}$)\\
  12: ~~~~~~~~~~~~~~Shed load according to the solution to (\ref{saddlecas}) \\
  13: ~~~~~~~~~~\textbf{else} \\
  14: ~~~~~~~~~~~~~~Shed load according to the solution to (\ref{saddle}) \\
  15: ~~~~~~~~~~\textbf{end if} \\
  16: ~~~~~~~~~~Compute the power flow on each branch \\
  17: ~~~~~~~\textbf{end if}  \\
  18: \textbf{end while} \\ \hline
 \end{tabular}
 \end{center}
\end{table}

Table \ref{MPA2} presents the procedure of RPS, which resembles NPS except for controllable cascading steps and saddle point dynamics. Compared with NPS, RPS imposes the operation of optimal load shedding in two consecutive cascading steps to terminate cascading outages. If RPS fails to prevent further cascading outages through the model validation of two consecutive load shedding, the one-off load shedding from the solution to (\ref{saddle}) will take effect as a remedy.

\section{Numerical Simulations}\label{sec:sim}

\begin{figure}
\scalebox{0.55}[0.55]{\includegraphics{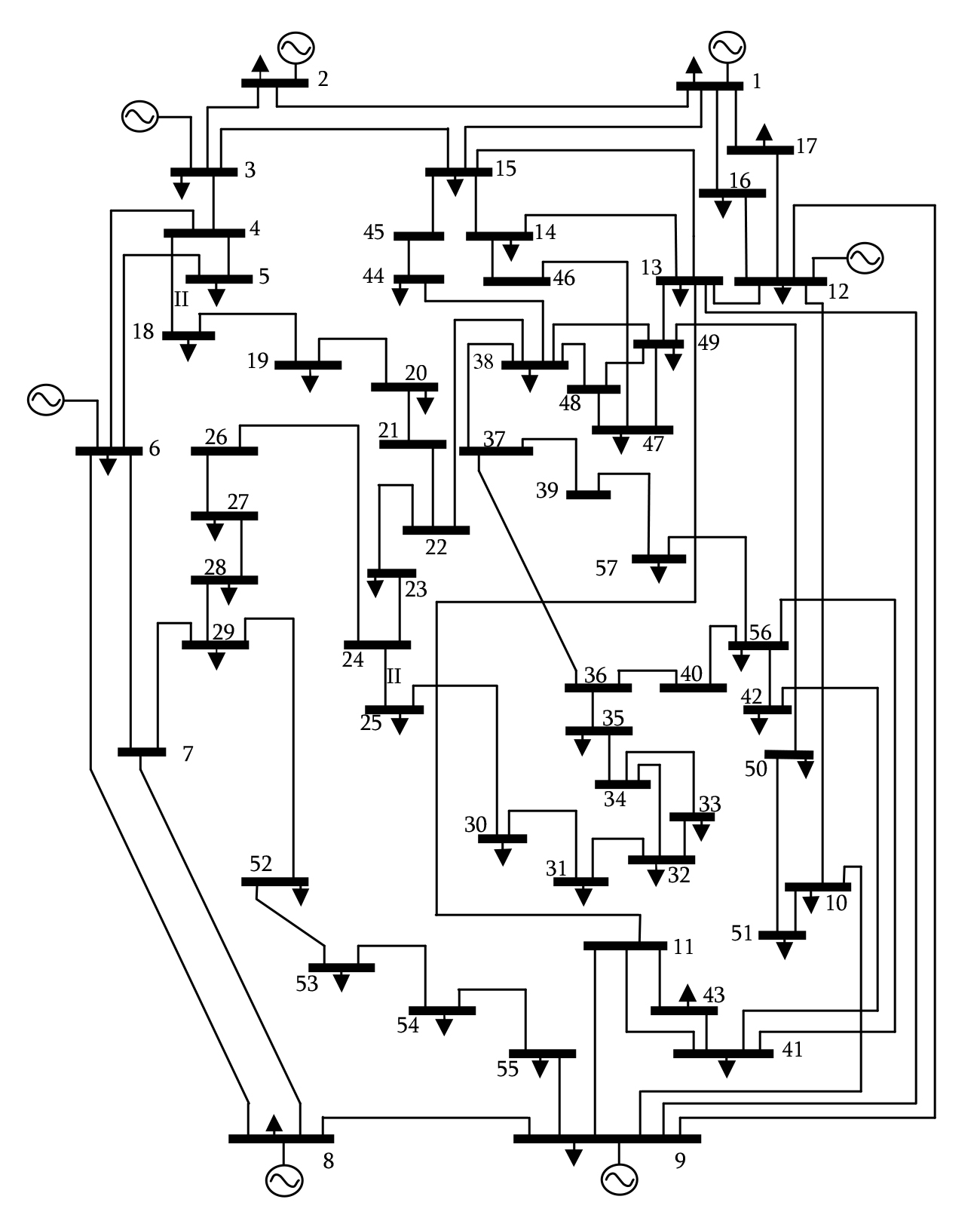}}\centering
\caption{\label{bus57} IEEE 57 Bus System.}
\end{figure}

In this section, we implement and compare two protection schemes ($i.e.$, NPS and RPS) on IEEE 57 Bus Systems (see Fig.~\ref{bus57}).

\subsection{Nonrecurring Protection Scheme}
The parameter setting for NPS in Table 1 is given in detail as follows. For simplicity, we set $W=(1,1,...,1)^T$ to ensure that all buses are equally treated in load shedding.
Per unit values are adopted with the base value of power 100MV in the simulations.
The minimum injected power on each bus is equal to the negative value of its total load, and the maximum injected power is provided by the generator connected to this bus. For load buses, their maximum injected power is $0$. Consequently, for load buses, we set $\bar{P}_i=0$ and $\underline{P}_i$ is the total injected power. For generator buses, $\bar{P}_i$ is the total power from the generator, while $\underline{P}_i$ is the injected power from the load. The threshold of power transmission on each branch is $1$ pu with $\sigma=10^3$ in the approximation function, and the operation of optimal load shedding is implemented at the $4$th cascading step ($i.e.$, $m=4$) according to the solution to Optimization Problem (\ref{convex}). In addition, Euler method is employed to implement the numerical algorithm of saddle point dynamics (\ref{saddle}) with time interval $0.1$s.

Branch $10$ is severed as the initial disturbance of power systems, and Figure~\ref{nop} shows the cascading process of the IEEE 57 Bus Systems without protection schemes. After 6 cascading steps, the system ends up with $43$ connected branches ($5$ branches with the transmission power) and the total transmission power of $1.004$pu. In contrast, Figure~\ref{a1cas} presents the topology evolution of the IEEE 57 Bus Systems with NPS at the $4$th cascading step. It is observed that the power system is well protected by NPS since most branches in the network are in a good state of transmitting power among buses. Specifically, the cascading process stops after implementing the optimal load shedding at the $4$th cascading step, and the system remains unchanged with the total transmission power of $9.156$pu and 53 connected branches in operation and no idle branches. Moreover, the objective function is minimized with the value of 0.1068. The upper panel in Fig.~\ref{mpa1} describes the evolution of Saddle Point Dynamic (\ref{saddle}) within $10$s, and the trajectories get stable after $4$s. The lower panel shows the load shedding on each bus at the $4$th cascading step. There are no negative amounts of load shedding, which implies the absence of generator tripping during power systems protection.

\begin{figure}
\scalebox{0.8}[0.95]{\includegraphics{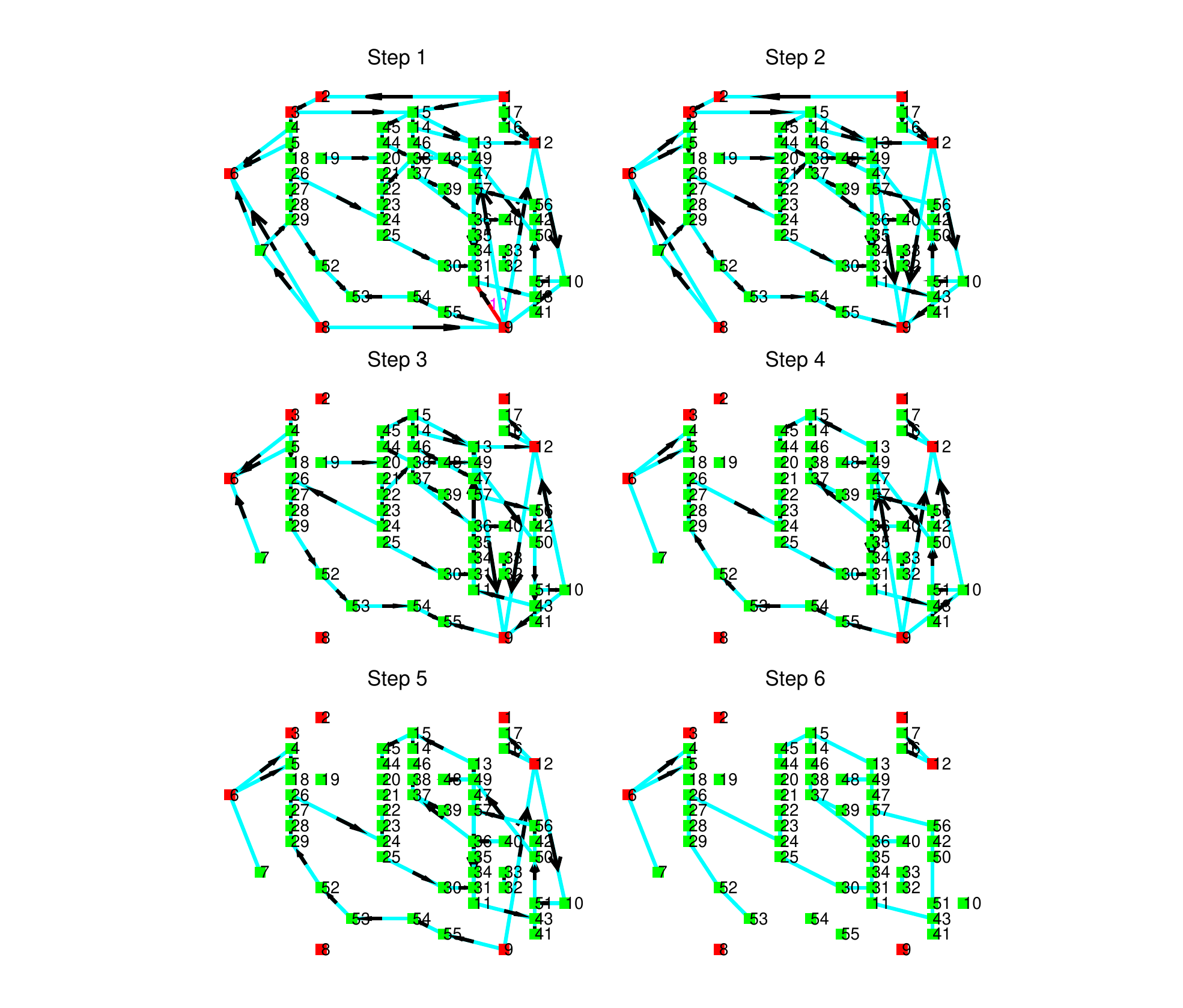}}\centering
\caption{\label{nop} Cascading process of the IEEE 57 Bus System without protection. Red squares represent the generator buses, and green ones denote the load buses. The arrow on each branch refers to the power flow, and it disappears if there is no transmission power on the branch. The initial disturbance is added on Branch $10$ (red link) by severing this branch.}
\end{figure}

\begin{figure}
\scalebox{0.8}[0.8]{\includegraphics{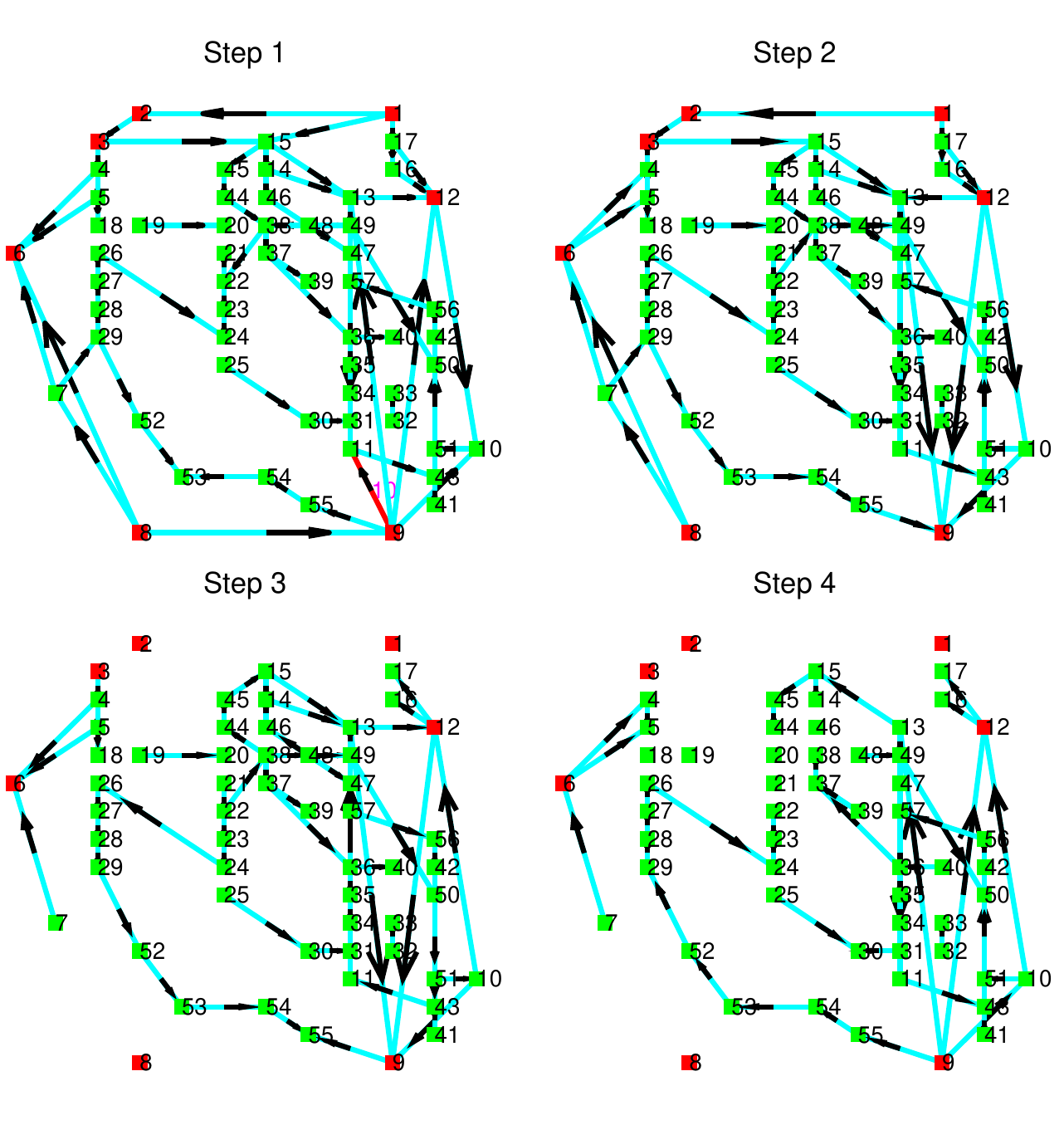}}\centering
\caption{\label{a1cas} Cascading process of the IEEE 57 Bus System protected by NPS.}
\end{figure}

\begin{figure}\centering
\scalebox{0.6}[0.6]{\includegraphics{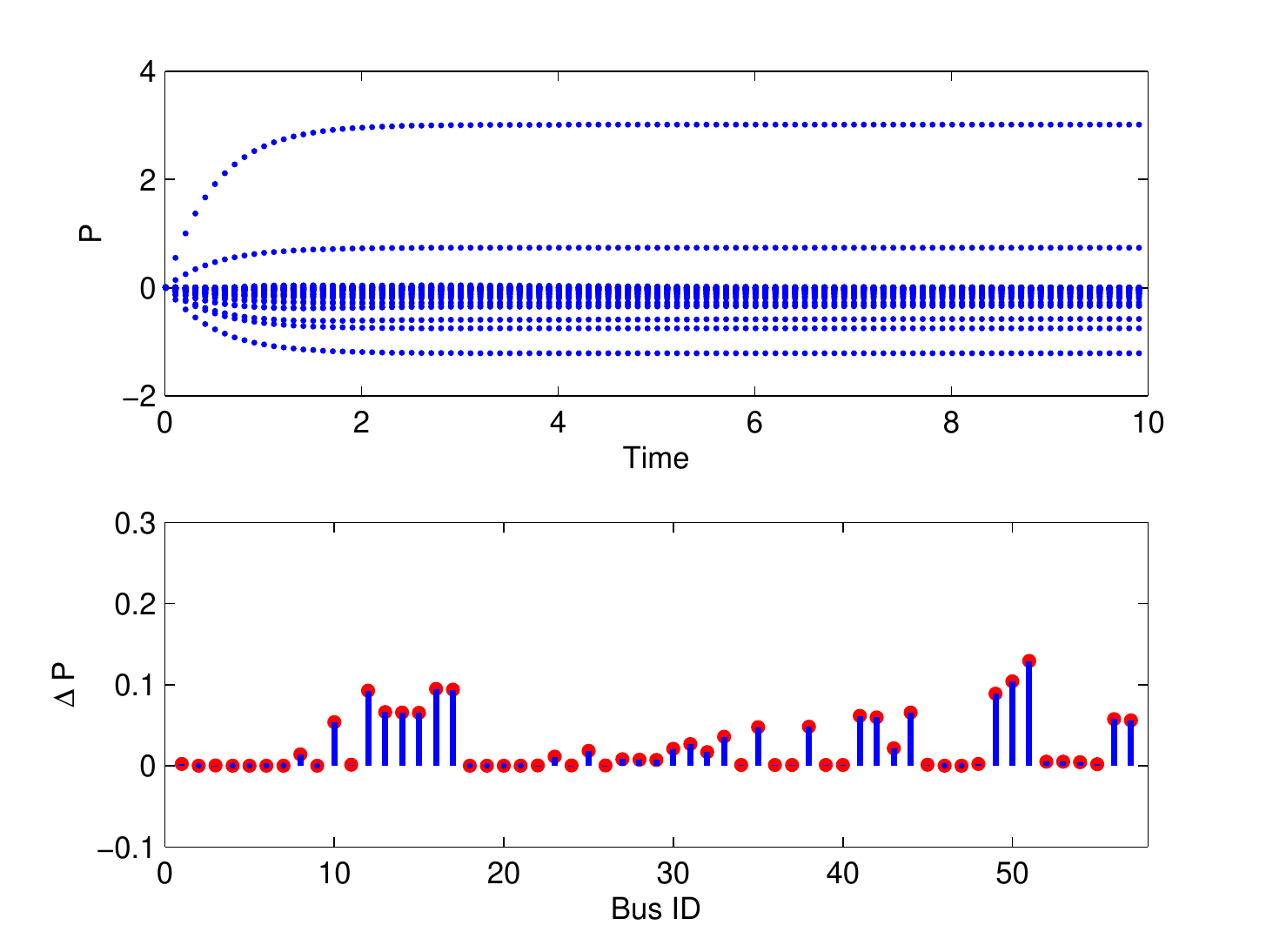}}\centering
 \caption{\label{mpa1} Time evolution of saddle point dynamics and load shedding in NPS.}
\end{figure}

\subsection{Recurring Protection Scheme}
The parameter setting of RPS is the same as that of NPS except that the operation of load shedding is implemented at both the $3$rd and the $4$th cascading steps. Figure~\ref{a2cas} presents the cascading process of the power system with RPS. After shedding load according to the solution of Saddle Point Dynamics (\ref{saddlecas}) at two consecutive cascading steps, the cascading process terminates at the $4$th cascading step. Figure~\ref{mpa2} presents the state trajectories of Saddle Point Dynamics (\ref{saddlecas}) within $10$s and the distribution of load shedding on each bus according to RPS. Moreover, the total transmission power is $12.496$pu with 53 connected branches and no idle branches. The value of objective function is $0.0979$ in the end, less than $0.1068$ in NPS. Additionally, the power mismatch at the $4$th cascading step is $0.0732$, which indicates the superiority of RPS. As observed in the upper panel, the trajectories of Saddle Point Dynamics (\ref{saddlecas}) converge to stable values after $4$s. In the lower panel, the red line segments denote the amount of load shedding at the $3$rd cascading step, while the green ones refer to those at the $4$th cascading step. It is worth pointing out that the negative value at Bus 12 indicates the decrease of power supply to this generator bus. To make a comprehensive analysis, we also carry out the load shedding of RPS at the $4$th and $5$th cascading steps. It is observed that the cascading outages come to an end with 53 connected branches in operation and the total transmission power of $10.799$pu. The objective function is minimized to $0.0919$, which is still less than $0.1068$ in NPS. This demonstrates the better performance of RPS to maintain the power transmission and restrain cascading outages in spite of the high computational complexity.

\begin{figure}
\scalebox{0.8}[0.8]{\includegraphics{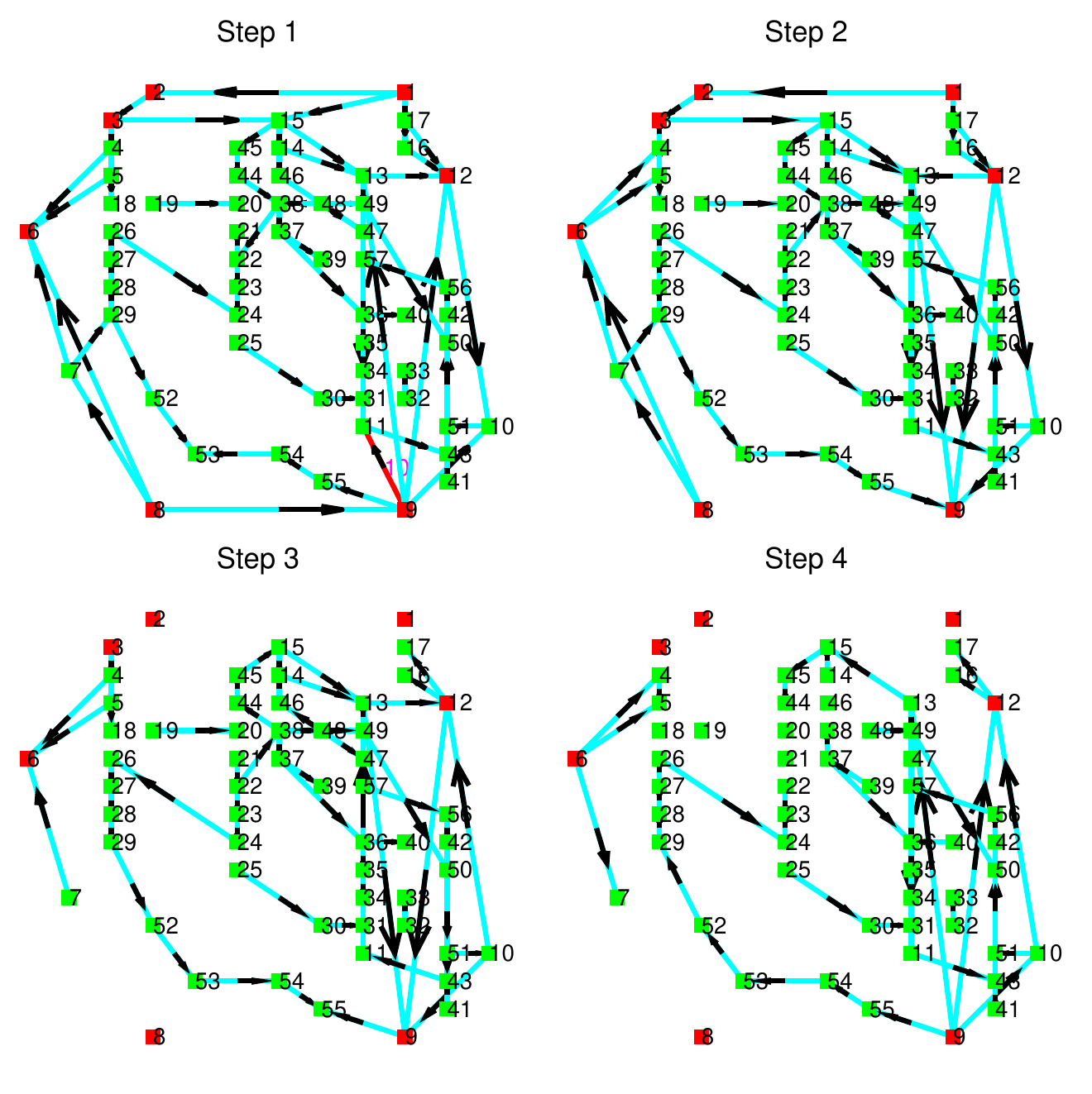}}\centering
\caption{\label{a2cas} Cascading process of the IEEE 57 Bus System protected by RPS.}
\end{figure}

\begin{figure}\centering
\scalebox{0.6}[0.6]{\includegraphics{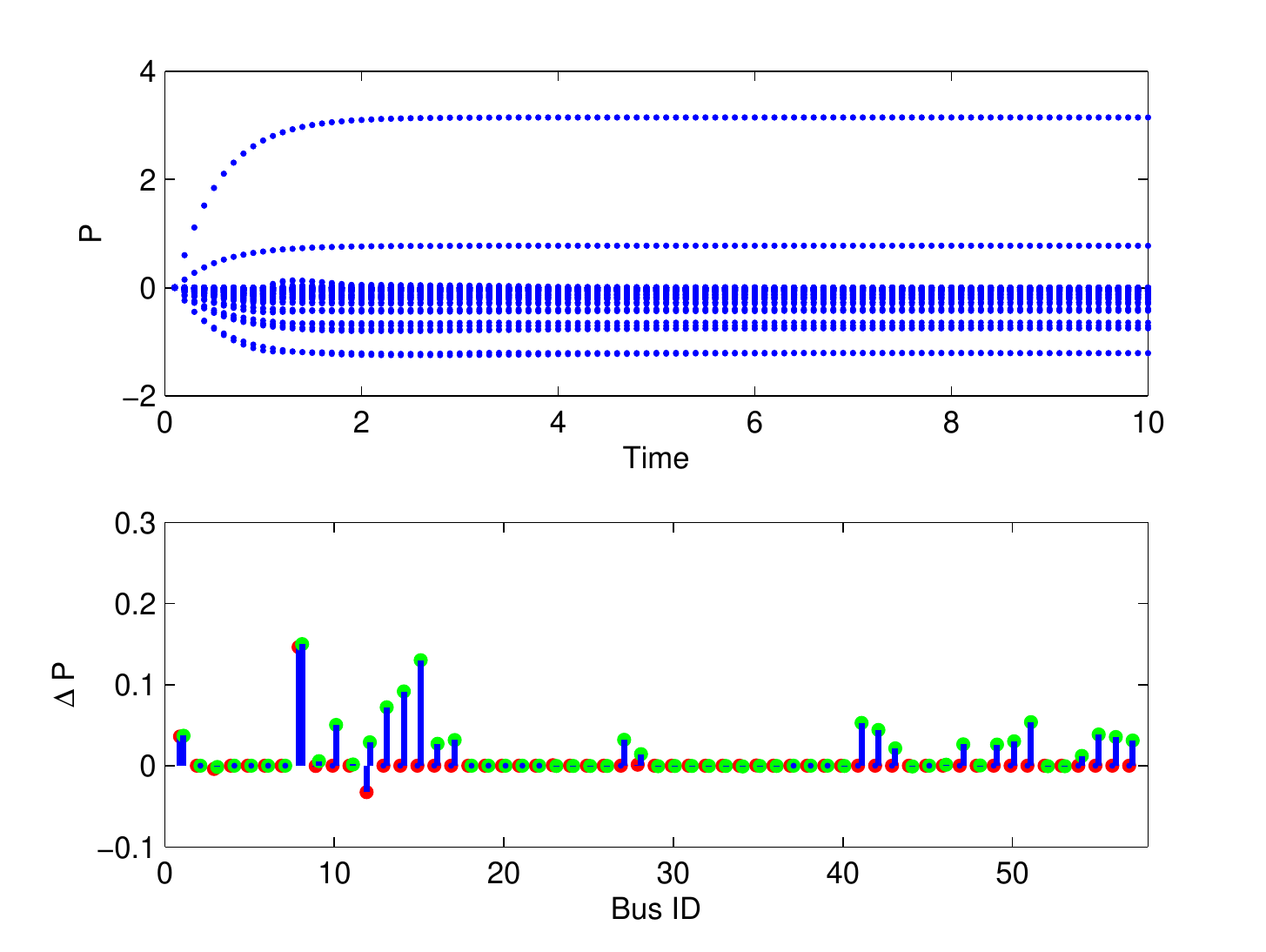}}\centering
 \caption{\label{mpa2} Time evolution of saddle point dynamics and load shedding in RPS.}
\end{figure}

\subsection{The Effect of Parameter $W$}
\begin{figure}
\scalebox{0.8}[0.8]{\includegraphics{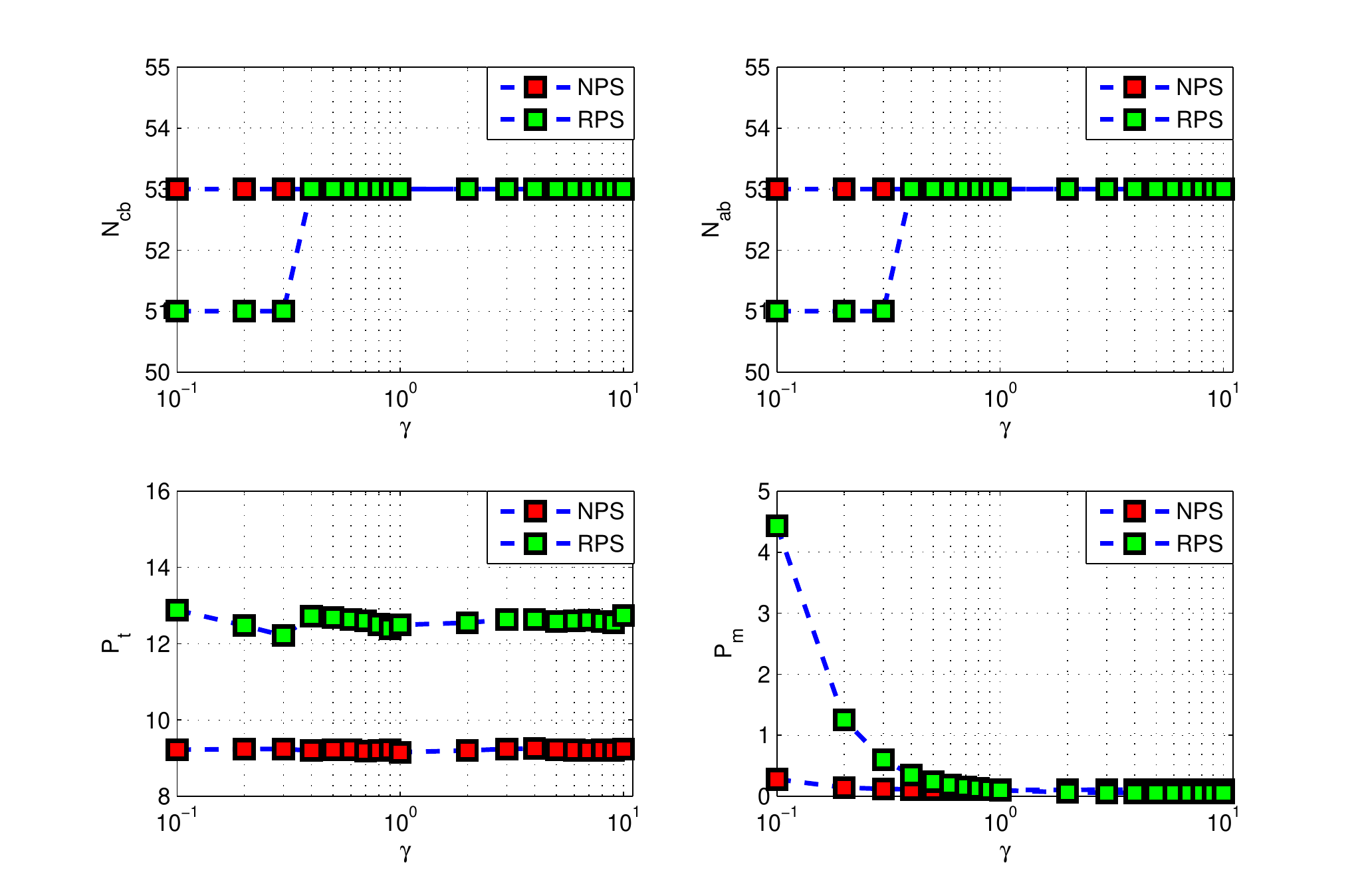}}\centering
\caption{\label{weig} The effect of weight proportion $\gamma$ on the final configuration of power systems.$N_{cb}$ and $N_{ab}$ refer to the number of connected branches and the number of active branches with transmission power at the end of cascading failures, respectively. $P_t$ and $P_m$ represent the total transmission power and the total amount of load shedding with respect to the original power, respectively.}
\end{figure}
The parameter $W$ describes the bus weight in the optimal load shedding. To be specific, the buses with smaller weights are deemed less significant in power networks, thus WAPCS prefers to shed load on these buses during emergency. This section aims
to investigate the effects of $W$ on the performance of optimal load shedding. For simplicity, we assign the same weight $W_l$ to load buses and the same weight $W_g$ to generator buses and take into account the weight distribution and proportion $\gamma=W_g/W_l$ on the two types of buses. Numerical simulations are conducted to implement NPS and RPS on the IEEE 57 Bus Systems with the same initial disturbance, respectively ($i.e.$, severing Branch $10$). For NPS, the load shedding is implemented at the $4$th cascading step, while remedial actions are taken at the $3$rd and $4$th cascading steps for RPS. Then we look into the power transmission and intact branches in the final cascading step by tuning the parameter $\gamma$ from $0.1$ to $10$ gradually. Figure \ref{weig} presents the dependence of connected branches $N_{cb}$, active branches $N_{ab}$, transmission power $P_t$ and load shedding $P_m$ on the parameter $\gamma$ in the final configuration. It is observed that NPS is insensitive to the variation of $\gamma$, and all the four measures keep stable. In contrast, RPS behaves differently as $\gamma$ varies in the range of $[0.1,1]$. Specifically, $N_{cb}$ and $N_{ab}$ jump from $51$ to $53$ as $\gamma$ increases from $0.3$ to $0.4$. In addition, the amount of load shedding $P_m$ declines greatly as $\gamma$ varies from $0.1$ to $0.3$. It is worth pointing out that RPS outperforms NPS in terms of total power transmission after terminating the cascading outages. Compared with NPS, RPS ensures the more transmission power with the fewer amount of load shedding when $\gamma$ is larger than $1$. In both NPS and RPS, we can observe that $N_{cb}$ is always equal to $N_{ab}$ with the same parameter $\gamma$.

\subsection{NPS vs RPS}
Obviously, NPS is superior to RPS in terms of computation complexity, and it takes the less time for NPS to work out Optimization Problem (\ref{convex}) according to Saddle Point Dynamics (\ref{saddle}). This enables NPS to implement
the optimal load shedding at the earlier stage of cascading blackouts. According to simulation results, RPS ensures more transmission power in power networks by taking consecutive remedial actions during emergency. Considering that more control variables are available in RPS, it is able to provide more flexible solutions of load shedding to control and protect power systems against cascading outages. In theory, RPS is able to achieve
the termination of cascading outages by shedding relatively fewer loads on buses. Thus, RPS can be applied to the regional protection of power networks for more efficient power transmission, while NPS is more suitable for the wide-area protection and control of power systems with limited response time.

\section{Conclusions}\label{sec:con}
In this paper, we developed a protection architecture of load shedding to prevent the cascading failure of power systems in the framework of convex optimization. Moreover, two types of protection schemes were introduced to interdict the cascading process of power systems. The first scheme takes remedial measures based on prediction of cascading process at a given cascading step, while the second one is coupled with the cascading model of transmission lines by shedding load at consecutive cascading steps. We proved that these two schemes are able to achieve the optimal load shedding in theory. Numerical validation on IEEE 57 Bus Systems has demonstrated the effectiveness of the proposed protection schemes. Future work may include the robust protection strategies for power systems with uncertain parameters, the consideration of high-voltage direct current (HVDC) links and the flexible alternating current transmission system (FACTS) in power transmission and distributed load shedding to enhance the resilience of power systems.

\section*{Acknowledgment}
This work is partially supported by the Future Resilient Systems Project at the Singapore-ETH Centre (SEC), which is funded by the National Research Foundation of Singapore (NRF) under its Campus for Research Excellence and Technological Enterprise (CREATE) program. It is also partially supported by Ministry of Education of Singapore under contract MOE2016-T2-1-119.

\section*{Appendix A: Proof of Proposition \ref{iff} }

Since the objective function and constraint conditions are convex, we have
\begin{equation*}
\begin{split}
J(P,W)&\geq J(P^*,W)+\nabla J(P^*,W)^T(P-P^*)\\
(P^m_{ij})^2-\sigma_{ij}^2&\geq \left[(P^{m*}_{ij})^2-\sigma_{ij}^2\right]+\nabla\left[(P^{m*}_{ij})^2-\sigma_{ij}^2\right]^T(P-P^*)\\
P_i-\bar{P}_i&\geq P^*_i-\bar{P}_i+e_i^T(P-P^*)\\
\underline{P}_i-P_i&\geq \underline{P}_i-P^*_i+e_i^T(P-P^*)
\end{split}
\end{equation*}
Multiplying the above inequalities by the optimal Lagrangian multipliers and summing them up yields
\begin{equation*}
\begin{split}
&~~~~L(P,\lambda^*,\tau^*)\\
&=J(P,W)+\sum_{(i,j)\in \mathcal{E}}\lambda^*_{ij}\left[(P^m_{ij})^2-\sigma_{ij}^2\right]
+\sum_{i=1}^{n_b}\bar{\tau}^*_i(P_i-\bar{P}_i)+\sum_{i=1}^{n_b}\underline{\tau}^*_i(\underline{P}_i-P_i)\\
&\geq J(P^*,W)+\sum_{(i,j)\in \mathcal{E}}\lambda^*_{ij}\left[(P^{m*}_{ij})^2-\sigma_{ij}^2\right]
+\sum_{i=1}^{n_b}\bar{\tau}^*_i(P^*_i-\bar{P}_i)+\sum_{i=1}^{n_b}\underline{\tau}^*_i(\underline{P}_i-P^*_i)\\
&+\left\{\nabla J(P^*,W)+\sum_{(i,j)\in \mathcal{E}}\lambda^*_{ij}\nabla\left[(P^{m*}_{ij})^2-\sigma_{ij}^2\right]+\sum_{i=1}^{n_b}(\bar{\tau}^*_i-\underline{\tau}^*_i)e_i\right\}^T(P-P^*)\\
&=L(P^*,\lambda^*,\tau^*)\\
&+\left\{\nabla J(P^*,W)+\sum_{(i,j)\in \mathcal{E}}\lambda^*_{ij}\nabla\left[(P^{m*}_{ij})^2-\sigma_{ij}^2\right]+\sum_{i=1}^{n_b}(\bar{\tau}^*_i-\underline{\tau}^*_i)e_i\right\}^T(P-P^*)\\
\end{split}
\end{equation*}
It follows from the KKT gradient condition that
$$
\nabla J(P^*,W)+\sum_{(i,j)\in \mathcal{E}}\lambda^*_{ij}\nabla\left[(P^{m*}_{ij})^2-\sigma_{ij}^2\right]+\sum_{i=1}^{n_b}(\bar{\tau}^*_i-\underline{\tau}^*_i)e_i=\mathbf{0}
$$
which implies
$$
L(P,\lambda^*,\tau^*)\geq L(P^*,\lambda^*,\tau^*)
$$
Moreover, the KKT conditions allow to establish the equation
$$
\sum_{(i,j)\in \mathcal{E}}\lambda^*_{ij}\left[(P^{m*}_{ij})^2-\sigma_{ij}^2\right]
+\sum_{i=1}^{n_b}\bar{\tau}^*_i(P^*_i-\bar{P}_i)+\sum_{i=1}^{n_b}\underline{\tau}^*_i(\underline{P}_i-P^*_i)=0
$$
which leads to
\begin{equation*}
\begin{split}
&~~~~L(P^*,\lambda,\tau)\\
&=J(P^*,W)+\sum_{(i,j)\in \mathcal{E}}\lambda_{ij}\left[(P^{m*}_{ij})^2-\sigma_{ij}^2\right]
+\sum_{i=1}^{n_b}\bar{\tau}_i(P^*_i-\bar{P}_i)+\sum_{i=1}^{n_b}\underline{\tau}_i(\underline{P}_i-P^*_i)\\
&\leq J(P^*,W)+\sum_{(i,j)\in \mathcal{E}}\lambda^*_{ij}\left[(P^{m*}_{ij})^2-\sigma_{ij}^2\right]
+\sum_{i=1}^{n_b}\bar{\tau}^*_i(P^*_i-\bar{P}_i)+\sum_{i=1}^{n_b}\underline{\tau}^*_i(\underline{P}_i-P^*_i) \\
&=L(P^*,\lambda^*,\tau^*)
\end{split}
\end{equation*}
Therefore, we get
$$
L(P^*,\lambda,\tau)\leq L(P^*,\lambda^*,\tau^*)\leq L(P,\lambda^*,\tau^*),\quad \forall~ \lambda_{ij}\geq0,~\bar{\tau}_i\geq0,~\underline{\tau}_i\geq0
$$
which indicates that $(P^*,\lambda^*,\tau^*)$ is a saddle point of the  Lagrangian function $L(P,\lambda,\tau)$.

Next, we demonstrate that a saddle point of the  Lagrangian function $L(P,\lambda,\tau)$ implies the optimality with the KKT conditions. Considering that
$$
L(P^*,\lambda^*,\tau^*)\leq L(P,\lambda^*,\tau^*), \quad \forall~\underline{P}_i \leq P_i\leq \bar{P}_i,
$$
$P^*$ solves the optimization problem $\min_{P}L(P,\lambda^*,\tau^*)$ and $P^*$ is a stationary point of $L(P,\lambda^*,\tau^*)$.
It thus follows
$$
\nabla_P L(P^*,\lambda^*,\tau^*)=\mathbf{0},
$$
which is equivalent to the KKT gradient condition. Moreover, it follows from
$$
L(P^*,\lambda,\tau)\leq L(P^*,\lambda^*,\tau^*),\quad \forall~ \lambda_{ij}\geq0,~\bar{\tau}_i\geq0,~\underline{\tau}_i\geq0
$$
that
$$
(P^{m*}_{ij})^2-\sigma_{ij}^2\leq0, \quad P^*_i-\bar{P}_i\leq0, \quad \underline{P}_i-P^*_i\leq0
$$
and
$$
J(P^*,W)=L(P^*,\mathbf{0},\mathbf{0})\leq L(P^*,\lambda^*,\tau^*)
$$
which leads to
$$
\lambda^*_{ij}\left[(P^{m*}_{ij})^2-\sigma_{ij}^2\right]=0,\quad \bar{\tau}^*_i(P^*_i-\bar{P}_i)=0,\quad \underline{\tau}^*_i(\underline{P}_i-P^*_i)=0
$$
Thus we get the KKT condition of complementary slackness. Finally, we show the optimality of $P^*$ to minimize the cost function. The KKT condition of complementary slackness and definition of saddle point enable us to obtain
$$
J(P^*,W)=L(P^*,\lambda^*,\tau^*)\leq L(P,\lambda^*,\tau^*),\quad \underline{P}_i\leq P_i\leq \bar{P}_i
$$
Since Slater's condition holds for convex optimization problem (\ref{convex}), we have
$$
J(P^*,W)\leq \inf_{\quad \underline{P}_i\leq P_i\leq \bar{P}_i}L(P,\lambda^*,\tau^*)=\theta(\lambda^*,\tau^*)\leq \max_{\lambda,\tau}\theta(\lambda,\tau)=\min_{P}J(P,W)
$$
which implies
$$
J(P^*,W)=\min_{P}J(P,W)
$$
Therefore, $P^*$ is the optimal solution to Problem (\ref{convex}). The proof is thus completed.

\section*{Appendix B: Proof of Proposition \ref{converge} }
Design Lyapunov function as follows
$$
V(P,\lambda,\tau)=\frac{1}{2}(\|P-P^*\|^2+\|\lambda-\lambda^*\|^2+\|\tau-\tau^*\|^2)
$$
The time derivative of $V(P,\lambda,\tau)$ along Saddle Point Dynamics (\ref{saddle}) gives
\begin{equation*}
\begin{split}
\dot{V}(P,\lambda,\tau)&=-(P-P^*)^T\nabla_P~L(P,\lambda,\tau)+(\lambda-\lambda^*)^T[\nabla_{\lambda}~L(P,\lambda,\tau)]^+_{\lambda}\\
                       &~~~+(\tau-\tau^*)^T[\nabla_{\tau}~L(P,\lambda,\tau)]^+_{\tau}
\end{split}
\end{equation*}
For each element in $\lambda$ and $\tau$, Eq.~(\ref{switch}) leads to
$$
(\lambda_{ij}-\lambda_{ij}^*)^T[\nabla_{\lambda_{ij}}~L(P,\lambda,\tau)]^+_{\lambda_{ij}}\leq(\lambda_{ij}-\lambda_{ij}^*)^T\nabla_{\lambda_{ij}}~L(P,\lambda,\tau)
$$
and
$$
(\tau_i-\tau_i^*)^T[\nabla_{\tau_i}~L(P,\lambda,\tau)]^+_{\tau_i}\leq(\tau_i-\tau_i^*)^T\nabla_{\tau_i}~L(P,\lambda,\tau)
$$
Therefore, one has
\begin{equation*}
\begin{split}
\dot{V}(P,\lambda,\tau)&\leq-(P-P^*)^T\nabla_P~L(P,\lambda,\tau)+(\lambda-\lambda^*)^T\nabla_{\lambda}~L(P,\lambda,\tau)\\
                       &~~~+(\tau-\tau^*)^T\nabla_{\tau}~L(P,\lambda,\tau)
\end{split}
\end{equation*}
Since $L(P,\lambda,\tau)$ is convex in $P$ and concave in $\lambda$ and $\tau$, we have
$$
-(P-P^*)^T\nabla_P~L(P,\lambda,\tau)\leq L(P^*,\lambda,\tau)-L(P,\lambda,\tau)
$$
and
$$
(\lambda-\lambda^*)^T\nabla_{\lambda}~L(P,\lambda,\tau)+(\tau-\tau^*)^T\nabla_{\tau}~L(P,\lambda,\tau)\leq L(P,\lambda,\tau)-L(P,\lambda^*,\tau^*)
$$
which leads to
\begin{equation*}
\begin{split}
\dot{V}(P,\lambda,\tau)&\leq L(P^*,\lambda,\tau)-L(P,\lambda,\tau)+L(P,\lambda,\tau)-L(P,\lambda^*,\tau^*)\\
                       &=L(P^*,\lambda,\tau)-L(P,\lambda^*,\tau^*)\\
                       &=\left[L(P^*,\lambda,\tau)-L(P^*,\lambda^*,\tau^*)\right]+\left[L(P^*,\lambda^*,\tau^*)-L(P,\lambda^*,\tau^*)\right]
\end{split}
\end{equation*}
Moreover, it follows from
$$
L(P^*,\lambda,\tau)-L(P^*,\lambda^*,\tau^*)\leq0
$$
and
$$
L(P^*,\lambda^*,\tau^*)-L(P,\lambda^*,\tau^*)<0
$$
that $\dot{V}(P,\lambda,\tau)<0$. This indicates $V(P,\lambda,\tau)$ converges to $0$ (and $P$ also converges to the optimal value $P^*$) as time goes to infinity. The proof is thus completed.

\section*{Appendix C: Proof of Proposition \ref{cp_con}}

Design Lyapunov function as follows
$$
V(P^m,P^{m-1},\lambda,\tau)=\frac{1}{2}(\sum_{k=m-1}^{m}\|P^k-P^{k*}\|^2+\sum_{k=m-1}^{m}\|\tau^k-\tau^{k*}\|^2+\|\lambda-\lambda^*\|^2)
$$
The time derivative of $V(P,\lambda,\tau)$ along Saddle Point Dynamics (\ref{saddlecas}) gives
\begin{equation*}
\begin{split}
\dot{V}(P^m,P^{m-1},\lambda,\tau)&=\sum_{k=m-1}^{m}(P^{k*}-P^k)^T\nabla_{P^k}~L(P^m,P^{m-1},\lambda,\tau)\\
                                 &+\sum_{k=m-1}^{m}(\tau^k-\tau^{k*})^T[\nabla_{\tau}~L(P^m,P^{m-1},\lambda,\tau)]^+_{\tau^k}\\
                                 &+(\lambda-\lambda^*)^T[\nabla_{\lambda}~L(P^m,P^{m-1},\lambda,\tau)]^+_{\lambda}\\
\end{split}
\end{equation*}
Following the same inference in Proposition \ref{converge}, one obtains
\begin{equation*}
\begin{split}
\dot{V}(P^m,P^{m-1},\lambda,\tau)&\leq\sum_{k=m-1}^{m}(P^{k*}-P^k)^T\nabla_{P^k}~L(P^m,P^{m-1},\lambda,\tau)\\
                                 &+\sum_{k=m-1}^{m}(\tau^k-\tau^{k*})^T\nabla_{\tau}~L(P^m,P^{m-1},\lambda,\tau)\\
                                 &+(\lambda-\lambda^*)^T\nabla_{\lambda}~L(P^m,P^{m-1},\lambda,\tau)\\
\end{split}
\end{equation*}
Since $L(P^m,P^{m-1},\lambda,\tau)$ is strictly convex in $P^m$ and $P^{m-1}$ and concave in $\lambda$ and $\tau$, one has
$$
\sum_{k=m-1}^{m}(P^{k*}-P^k)^T\nabla_{P^k}~L(P^m,P^{m-1},\lambda,\tau)\leq L(P^{m*},P^{m-1*},\lambda,\tau)-L(P^m,P^{m-1},\lambda,\tau)<0
$$
and
\begin{equation*}
\begin{split}
&~~~~\sum_{k=m-1}^{m}(\tau^k-\tau^{k*})^T\nabla_{\tau}~L(P^m,P^{m-1},\lambda,\tau)+(\lambda-\lambda^*)^T\nabla_{\lambda}~L(P^m,P^{m-1},\lambda,\tau)\\
&\leq L(P^{m},P^{m-1},\lambda,\tau)-L(P^m,P^{m-1},\lambda^*,\tau^*)\leq 0
\end{split}
\end{equation*}
which implies
\begin{equation*}
\begin{split}
\dot{V}(P^m,P^{m-1},\lambda,\tau)&\leq L(P^{m*},P^{m-1*},\lambda,\tau)-L(P^m,P^{m-1},\lambda,\tau)\\
                                 &+L(P^{m},P^{m-1},\lambda,\tau)-L(P^m,P^{m-1},\lambda^*,\tau^*)\\
                                 &<0
\end{split}
\end{equation*}
Therefore, $V(P^m,P^{m-1},\lambda,\tau)$ converges to $0$ as time goes to infinity, which indicates the conclusion of this proposition.  Thus we complete the proof.

\end{document}